\definecolor{mh}{rgb}{0,0,0.75}
\newtheorem{prop}{Proposition}[section]
\newcommand{\nwc}{\newcommand}
\nwc{\R}{\mathbb R}
\nwc{\Z}{\mathbb Z}
\nwc{\N}{\mathbb N}
\newcommand{\ignore}[1]{}
\nwc{\eps}{\varepsilon}
\begin{document}
%
%
\title{ Instabilities and oscillations in coagulation equations with kernels of homogeneity one}
\author{ %
Michael Herrmann\thanks{ %
Westf\"alische Wilhelms-Universit\"at M{\"u}nster,
Institut f\"ur Numerische und Angewandte Mathematik
} %
\and %
Barbara Niethammer\thanks{ %
Rheinische Friedrich-Wilhelms-Universit\"at Bonn, Institut f\"ur Angewandte Mathematik
} %
\and{ %
Juan~J.L.~Vel\'{a}zquez}\footnotemark[2] %
} %
\maketitle
%
%
\begingroup
\let\thefootnote\relax\footnotetext{{\tt michael.herrmann@uni-muenster.de}, {\tt niethammer@iam.uni-bonn.de},
 {\tt velazquez@iam.uni-bonn.de}} 
\endgroup
%
%
%
%
\begin{abstract}
We discuss the long-time behaviour of solutions to Smoluchowski's coagulation equation with kernels  of homogeneity one, combining formal asymptotics, heuristic
arguments based on linearization, and numerical simulations.  
The case of what we call diagonally dominant  kernels is particularly interesting. Here 
one expects that the long-time behaviour is, after a suitable change of variables, the same as for the Burgers equation. 
However, 
for kernels that are close to the diagonal one we obtain instability of both, constant solutions  and traveling waves and in general no convergence to $N$-waves for 
integrable data. On the other hand,  for kernels not close to the diagonal one
 these structures are stable, but the traveling waves have strong oscillations. This has  implications on the approach towards an  
  $N$-wave for integrable data, which is also characterized by strong oscillations near the shock front.
 
\end{abstract}
%
%
%
 \quad\newline\noindent%
 \begin{minipage}[t]{0.15\textwidth}%
   Keywords: 
 \end{minipage}%
 \begin{minipage}[t]{0.8\textwidth}%
Smoluchowski's coagulation equation,  kernels with homogeneity one
 \end{minipage}%
 \medskip
 \newline\noindent

%
%
%
%
%
%
%
\section{Introduction}\label{S.introduction}
%
%
\paragraph{Smoluchowski's coagulation equation.}
In 1916 Smoluchowski derived a mean-field equation to describe coagulation in homogeneous gold solutions, which is nowadays used in a large variety of mass aggregation 
phenomena \cite{Smolu16}. It applies to a homogeneous dilute system of clusters that can coagulate by binary collisions to form larger clusters. 
If  $f(t,\xi)$ denotes the number density of clusters of size $\xi>0$ at time  $t$, then $f$ satisfies
\begin{equation}\label{eq1}
\partial_t f(t,\xi) = \tfrac 1 2 \int_0^{\xi} K(\xi{-}\eta,\eta) f(t,\xi{-}\eta) f(t,\eta)\,d\eta - f(t,\xi) \int_0^{\infty} K(\xi,\eta) f(t,\eta)\,d\eta\,,
\end{equation}
where the  so-called rate kernel $K$ is a nonnegative and symmetric function  which  describes the microscopic details of the coagulation process.
\par
If clusters are spherical, diffuse
by Brownian motion and coagulate quickly when they get within a certain interaction range, Smoluchowski \cite{Smolu16} derived the kernel 
\begin{align*}
K(\xi,\eta) = \big(\xi^{1/3} + \eta^{1/3}\big)\big(\xi^{-1/3} + \eta^{-1/3}\big).
\end{align*}
Further examples of kernels for a diverse range of
applications, such as aerosol physics, polymerization, growth of nanostructures, or astronomy, can be found in the survey articles  
 \cite{Drake72,Aldous99,Friedlander00}.
\par
The well-posedness of the initial value problem corresponding to \eqref{eq1} is by now quite well understood. It is also known that  if the kernel $K$ grows too fast at infinity, e.g. if $K$ is homogeneous of degree larger than one,
 then solutions to \eqref{eq1} exhibit the phenomenon of gelation, that is the loss of mass at finite time,
 which  is linked to the formation of infinitely large clusters. On the other hand,  if $K$ has homogeneity smaller than one or if  $K(\xi,\eta) \leq C(1+\xi +\eta)$  and if the initial
 data have finite mass, then  solutions to \eqref{eq1}  conserve the mass for all times \cite{LauMisch02}.
\par

A scale invariance of the equation leads to the so-called scaling hypothesis which suggests that the long-time behaviour of solutions
to \eqref{eq1} is universal and asymptotically described by self-similar solutions.
 This issue  is so far understood \cite{MePe04}  for the two  solvable kernels of homogeneity $\gamma \leq 1$,  the constant
 one and the additive one, $K(\xi,\eta)=\xi+\eta$. For these kernels  equation \eqref{eq1} can be  solved explicitly by   Laplace transform. 
For non-solvable kernels with homogeneity strictly  smaller than one, only existence results for self-similar
solutions  are available \cite{FouLau05,EMR05,NV12a,NTV15}, while questions about uniqueness and convergence to these self-similar solutions
have so far only been answered for a  few special cases \cite{LauNiVel16,NTV15}.

\paragraph{Kernels with homogeneity one.}
Our goal in this article is to investigate the long-time behaviour of solutions to \eqref{eq1} for kernels with homogeneity equal to one. One example is
$K(\xi,\eta)= \big(\xi^{1/3}+\eta^{1/3}\big)^3$ that has been derived for particles moving in a shear flow \cite{Smolu16}, others appear in gravitational coalescence or charged
aerosols \cite{Drake72}.
Such kernels represent the borderline case that separates gelation from self-similar coarsening and we expect additional
phenomena and technical difficulties. Apart from the solvable additive kernel, for which  complete results are available \cite{Bertoin02,MePe04},
no other kernel of homogeneity one has, at least to our knowledge, been studied in the mathematical literature.
Some useful insight into the properties of solutions based on formal considerations has been gained
in \cite{vanDoErnst88,Leyvraz03}, but not all aspects have been investigated there.
It is the goal of this article to provide more information on  what to expect on the long-time behaviour of solutions to the coagulation equation with kernels of homogeneity
one. We will give some rigorous results for a special case, the diagonal kernel, and provide several conjectures for the general case that we support by numerical simulations.

\paragraph{Self-similar solutions.}
For the  following considerations  we rewrite \eqref{eq1} in conservative form, that is as 
 \begin{equation}\label{eq2}
  \partial_t \big( \xi f(t,\xi)\big) = - \partial_{\xi} \Big( \int_0^{\xi} \,d\eta \int_{\xi-\eta}^{\infty} \,d\zeta K(\eta,\zeta) \eta f(t,\eta) f(t,\zeta)\Big)\,.
 \end{equation}

 We are interested in finding self-similar solutions of \eqref{eq2} and hence make the ansatz
 \begin{equation}\label{ss1}
  f(t,\xi)= \frac{1}{s(t)^2} \Phi(x)\,, \qquad x= \frac{\xi}{s(t)}\,.
 \end{equation}
If we plug \eqref{ss1} into \eqref{eq2} we obtain, using that the kernel has homogeneity one, the formulas 
\begin{equation}\label{sform}
 s(t)=e^{bt}\,, \qquad b>0\,,
\end{equation}
and
\begin{equation}\label{ssequation}
b\big(2x\Phi(x)+x^2 \Phi'(x)\big) = \partial_x \Big(\int_0^x \int_{x-y}^{\infty} K(y,z)y \Phi(y)\Phi(z)\,dz\,dy\Big)\,,
\end{equation}
which, after  integration with respect to $x$, gives
\begin{equation}\label{ssequation2}
bx^2\Phi(x) =\int_0^x \int_{x-y}^{\infty} K(y,z)y \Phi(y)\Phi(z)\,dz\,dy\,.
\end{equation}

\paragraph{A useful change of variables.}
It turns out  that the  following change of variables is useful. 
 We define
 \begin{equation}\label{newvariables}
  \xi=e^{X} \qquad \mbox{ and } \qquad u(t,X)= \xi^2 f(t,\xi)
 \end{equation}
 such that  \eqref{eq2} becomes 
 \begin{equation}\label{uequation}
  \partial_t u = - \partial_X \Big( \int_{-\infty}^X \int_{X+ \ln (1-e^{Y-X})}^{\infty}  K(e^{Y-Z},1) u(t,Y) u(t,Z)\,dZ\,dY\Big)\,.
  \end{equation}
In these new variables, scale invariant solutions correspond to traveling wave solutions. 
More precisely, if we make the ansatz $u(t,X)=G(X-bt)$, then $G$ must satisfy
\begin{equation}\label{Gequation}
\begin{split}
 bG(X)&= \int_{-\infty}^X \int_{X+ \ln (1-e^{Y-X})}^{\infty}  K(e^{Y-Z},1) G(Y) G(Z)\,dZ\,dY\\
 &=\int_{-\infty}^0 \int_{\ln (1-e^{Y})}^{\infty}  K(e^{Y-Z},1) G(Y+X) G(Z+X)\,dZ\,dY.
\end{split}
\end{equation}
Notice that the translation invariance that we used in the last step in \eqref{Gequation} is a  consequence of the fact that $K$ has homogeneity one.
The relation to the original variables is $\xi^2 f(t,\xi)=G(\ln \xi -bt)$ and $x^2\Phi(x)= G(X)$, respectively.
Furthermore note that the quantity that is preserved by the evolution \eqref{eq1},  the first moment of $f$,  now turns into  the integral of $u$, that is
$\int_0^{\infty} \xi f(t,\xi)\,d\xi = \int_{-\infty}^{\infty} u(t,X)\,dX$.

\par
In the following we will only consider traveling waves that can be related to self-similar
solutions of \eqref{eq1} which decay sufficiently fast.
Therefore, we will restrict ourselves to solutions of \eqref{Gequation} which satisfy  $G(\infty ) =0$.
The precise role of the parameter $b$ will be discussed below, as it depends on the type of kernel as well as on the type of solutions that we will consider.

\paragraph{Class-II kernels.}
To proceed, we need to distinguish between two types of kernels, a fact that has
already been noticed in \cite{vanDoErnst88,Leyvraz03}.
In the first case, called  class-II kernels  in   \cite{vanDoErnst88},
one has $K(\xi,1) \to k_0>0$ as $\xi \to 0$.
In this case  we notice that the integral 
\[\int_{-\infty}^X \int_{X+ \ln (1-e^{Y-X})}^{\infty}  K(e^{Y-Z},1)\,dZ\,dY
 \]
 is not finite. Consequently, if a solution
 $G$ to \eqref{Gequation} exists for some $b>0$, then it must at least satisfy $G(X) \to 0$ as $|X| \to \infty$.

The most prominent example of a class-II kernel is the additive kernel, 
for which  it is known that there exists a whole family of self-similar solutions with
finite mass \cite{MePe04}. One of them has exponential decay, the others decay algebraically  such that the second moment is infinite. 
This family of solutions can be parameterized by the parameter $b$ in \eqref{Gequation}. In fact, if one normalizes the integral of $G$ to one there
is a one-to-one correspondence between $b$ and the decay behaviour of the solutions to \eqref{Gequation}. The result in \cite{MePe04} provides the existence
of self-similar solutions for any $b \geq 2$.

The question whether an analogous result holds for other class-II kernels is presently open. In Section \ref{S.classII} we give
self-consistent arguments to describe the expected decay behaviour of self-similar solutions, see the left panel in Figure \ref{fig:cartoon}, and formulate a conjecture that self-similar solutions 
exist for $b$ larger than a critical number $b_*$ that depends on the kernel.

\begin{figure}[h!]
  \centering
  \includegraphics[width=.4\textwidth]{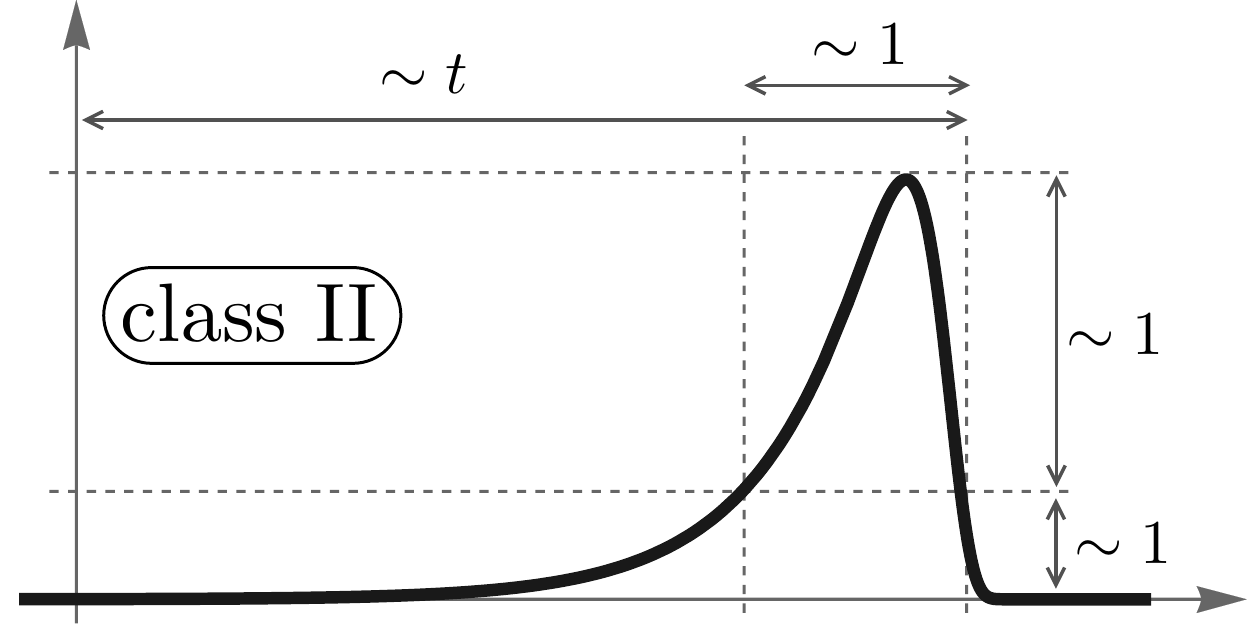}
  \hskip1cm
  \includegraphics[width=.4\textwidth]{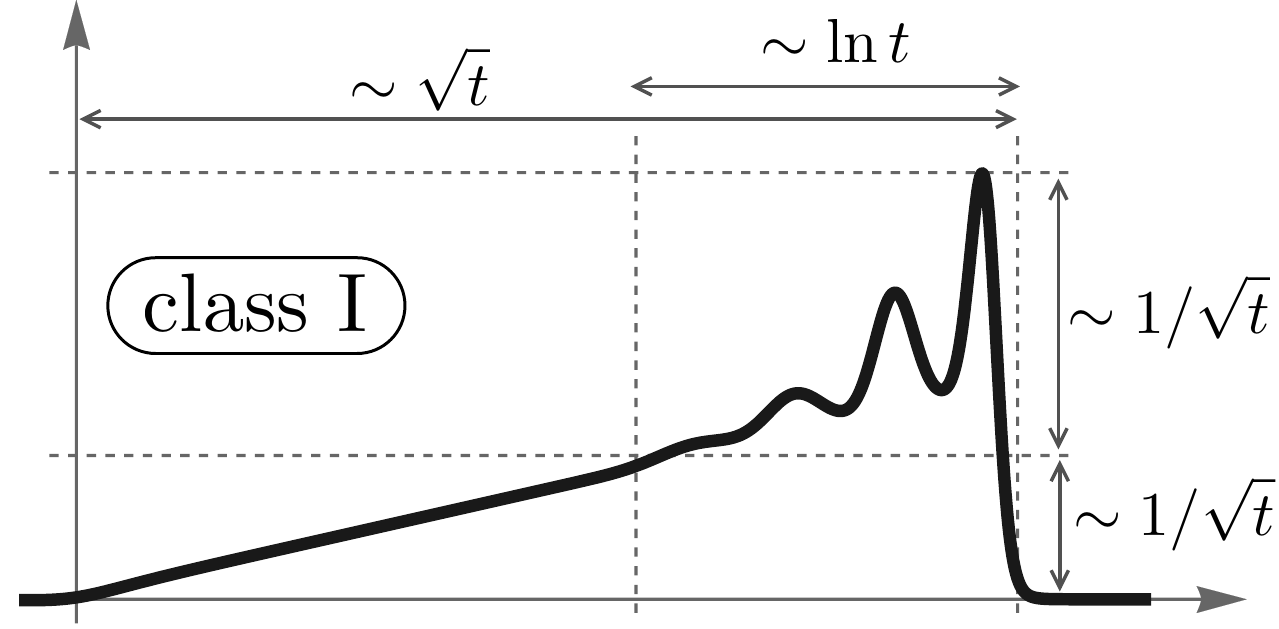}
\caption{Cartoons of the different long time behavior for data with finite mass.
\emph{Left panel}: For class-II kernel we expect to find self-similar solutions, which are transformed into traveling waves by the rescaling \eqref{newvariables}.
\emph{Right panel}: Traveling waves exist also for class-I kernels but carry infinite mass and can exhibit strong oscillations. The long-time 
behaviour is -- at least for some kernels  -- governed by an N-wave solution with attached traveling wave at the front.
}
  \label{fig:cartoon}
\end{figure}

\paragraph{Class-I kernels.}
We now consider kernels that satisfy $\lim_{x \to 0} K(x,1)=0$ and make the additional assumption, as has been done in \cite{vanDoErnst88},
that  the kernel is asymptotically a power law for small $x$, that is
\begin{equation}\label{assumption}
 K(x,1) \sim c_{\alpha} x^{\alpha} \qquad \mbox{ as } x \to 0 \quad \mbox{ for some } \alpha>0 \mbox{ and } c_{\alpha} \geq 0\,.
\end{equation}
Such kernels are called class-I kernels in the notation of \cite{vanDoErnst88},  we sometimes also call 
such  kernels   {\it diagonally dominant}. 
 A limiting case is the diagonal kernel 
\begin{align}
\label{DiagonalKernel}
K(x,y)=x^2 \delta_{x-y}
\end{align}
 for which 
only particles of the same size are allowed to coagulate. Note that this kernel has homogeneity one since the Dirac distribution has homogeneity minus one.
 Further examples are  given by the following  family of kernels (see Figure \ref{fig:kernels})
 \begin{equation}\label{kernelfamily}
  K_{\alpha}(x,y) = c_{\alpha} x^{\alpha}y^{\alpha}\big(x+y)^{1-2\alpha}\,, \qquad \alpha >0\,,
 \end{equation}
which interpolates between the additive kernel ($\alpha=0$) and the diagonal kernel ($\alpha \to \infty$). Here the constant $c_{\alpha}$ is a suitable normalization constant that
 will be chosen  later in Section \ref{Ss.general}.

\begin{figure}[h!]
  \centering
  \includegraphics[width=.4\textwidth]{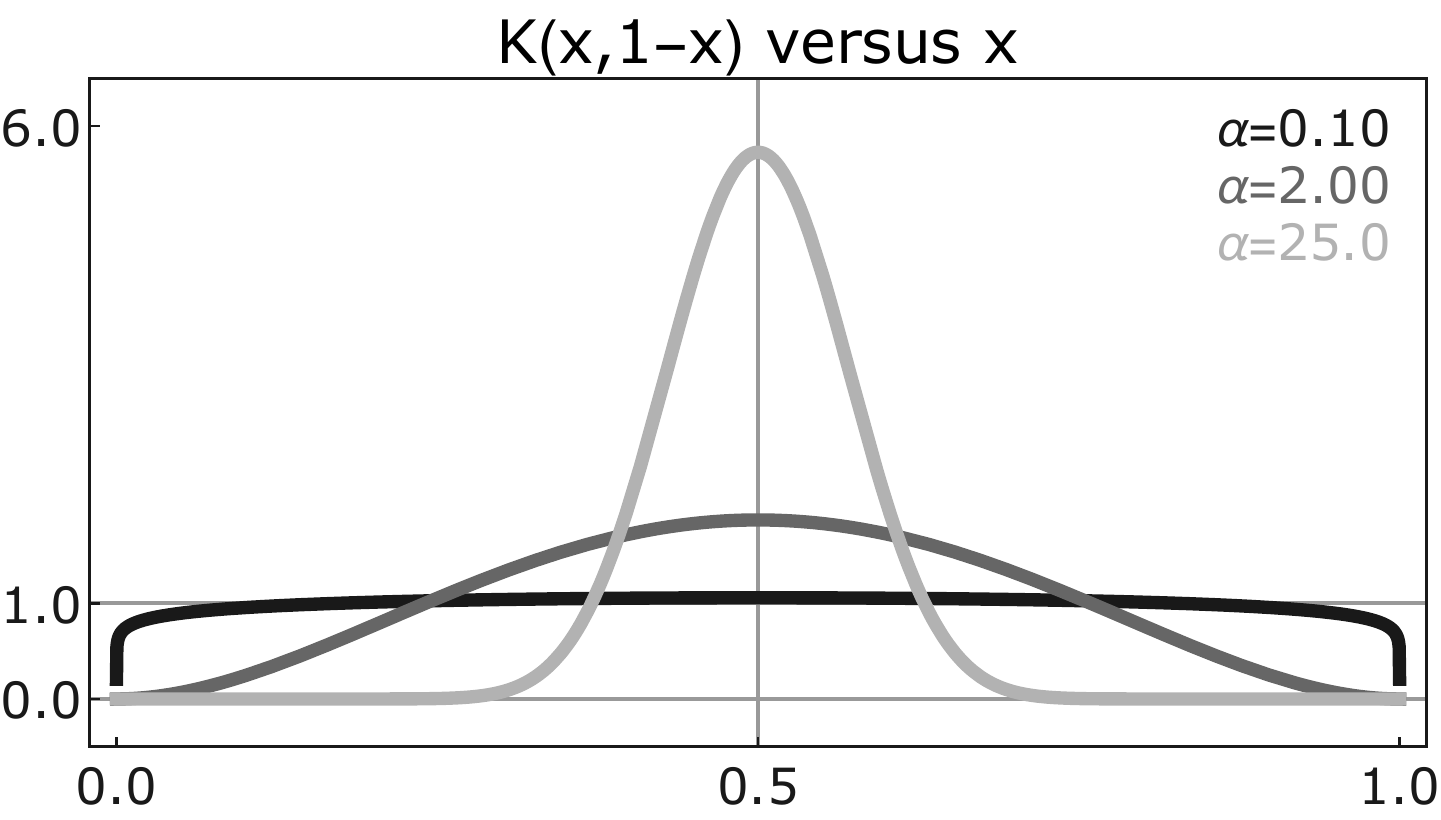}
  \caption{Three  kernels  of the family in \eqref{kernelfamily} with $c_{\alpha}=\frac{\Gamma(2+2\alpha)}{\Gamma(1+\alpha)^2}$ 
  such that $\int_0^1 K(x,1{-}x)\mathrm{d}x=1$. }
  \label{fig:kernels}
\end{figure}

 In contrast to kernels of class II,  for kernels of class I  satisfying \eqref{assumption}, we have
\begin{equation}\label{finiteintegral}A:=\int_{-\infty}^0 \int_{\ln (1-e^{Y})}^{\infty}  K(e^{Y-Z},1)\,dZ\,dY< \infty.
 \end{equation}
 Hence, the only self-consistent behaviour of a solution $G(X)$ as $X \to -\infty$ is 
$\lim_{X \to -\infty} G(X)=G(-\infty)=b A^{-1}>0$.
However, then the integral over $G$ is not finite, and this contradicts the assumption of finite mass, a fact that has  already been noticed
in \cite{vanDoErnst88,Leyvraz03}.
As a consequence, the ansatz \eqref{ss1} is inconsistent. Nevertheless,  solutions of \eqref{Gequation} are also of interest since they  correspond
to traveling wave solutions in the variable $\ln \xi$ for
$\xi^2 f(t,\xi)$, but they cannot appear as the large time limit for solutions with finite mass. 
Notice also that here the parameter $b$ determines the asymptotic jump
of the traveling wave and can without loss of generality be set equal to one.

In Section \ref{S.classI} we  first provide an argument based on formal asymptotics that the long-time behaviour of solutions 
with finite mass is to leading order the same as the long-time behaviour
of solutions to  the inviscid Burgers equation. As a first step we consider the diagonal kernel in Section \ref{Ss.diagonal},  for which we can show
rigorously that solutions converge to an $N$-wave in the long-time limit. Next, we consider the family of kernels $K_{\alpha}$ in \eqref{kernelfamily} for different values of $\alpha$.
In Sections \ref{Ss.stabilityneardiagonal} and \ref{Ss.stabilityalpha} we discuss the stability resp. instability of constant solutions before we turn to traveling waves
in Section \ref{Ss.traveling}. Numerical simulations suggest that  the wave profiles are monotone for large $\alpha$, i.e. for kernels close to the diagonal one,
but oscillatory for small $\alpha$, a fact
that can be explained by a linearization argument. Finally, in Section \ref{Ss.nwaves} we investigate, also numerically, the long-time behaviour
of solutions with integrable data.
It turns out that, at least for moderate $\alpha$, the solution converges to an $N$-wave,
but the transition at the shock front is given by a traveling wave and is hence oscillatory for some range 
of kernels. An illustration is given in the right panel of Figure \ref{fig:cartoon}.

 \section{Class II kernels}\label{S.classII}

Before we consider general class-II kernels, we briefly recall the results on self-similar solutions for the additive kernel, formulated in the variables
\eqref{newvariables}.
%
\subsection{The additive kernel}
%
 
 \begin{prop}[Section 6.1 in \cite{MePe04}]
Suppose that $K(\xi,\eta)=\xi+\eta$ and that $b=\frac{1{+}\rho}{\rho}$. Then there exists for  any $\rho \in (0,1]$  a solution  $G_{\rho}(X)$ to \eqref{Gequation}
with  unit mass, that is 
 $\int_{-\infty}^{\infty} G_{\rho}(X)\,dX=1$.
For $\rho=1$ the solution is 
\begin{equation}\label{G1}
 G_1(X)= \tfrac{1}{\sqrt{2\pi}} e^{\frac{X}{2}} e^{-\frac{e^{X}}{2}}\,,
\end{equation}
while for $\rho \in (0,1)$ the solution is given by 
\begin{equation}\label{Grhorep}
G_{\rho}(X)= \frac{1}{\pi} \sum_{k=1}^{\infty} \frac{(-1)^{k-1}}{k!} e^{k\frac{\rho}{\rho+1}X }\Gamma\Big(1+k-k\frac{\rho}{1+\rho}\Big)\sin\Big(k \pi \frac{\rho}{1+\rho}\big)\,.
\end{equation}
The asymptotics of $G$ for $|X|\to \infty$ are
\begin{equation}\label{Grhominus}
 G_{\rho}(X) \sim  \frac{\sin\big( \frac{\pi \rho}{1+\rho}\big) \Gamma \big( \frac{1}{1+\rho}\big)}{\pi (1+\rho)} e^{\frac{\rho}{1+\rho}X} \qquad \mbox{ as } X \to -\infty
\end{equation}
and
\begin{equation}\label{Grhoplus}
 G_{\rho}(X) \sim  \frac{\Gamma(1+\rho)\sin(\pi(1-\rho)}{\pi}  e^{-\rho X} \qquad \mbox{ as } X \to \infty\,.
\end{equation}
\end{prop}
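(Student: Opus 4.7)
The additive kernel is solvable via Laplace transform, and I would follow that classical route, as in \cite{MePe04}. First, I would apply a Laplace transform in the original variable $\xi$ to the self-similar fixed-point equation \eqref{ssequation2} with $K(\xi,\eta)=\xi+\eta$. Because of the additive structure, the double convolution-type integral on the right-hand side factorizes into products of transforms of $\Phi$ and of $\xi\,\Phi$, so \eqref{ssequation2} reduces to a first-order nonlinear ODE for an appropriate transform $\psi(q)$ of the profile. The normalization $\int_0^\infty \xi\,\Phi(\xi)\,d\xi = 1$ fixes one initial condition, while the speed $b=(1+\rho)/\rho$ enters through the ODE coefficients and selects the one-parameter family.

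Second, I would integrate this ODE explicitly. The solution admits an implicit algebraic relation of the form $q=h_\rho(\psi)$ involving the exponent $\rho/(1+\rho)$, giving $\psi$ in closed form as an inverse function. I would then invert the transform. For $\rho=1$ the formula for $\psi$ is elementary enough to invert directly, yielding $\Phi(\xi)\propto \xi^{-3/2}e^{-\xi/2}$, which becomes \eqref{G1} after the change of variables $X=\ln\xi$, $G(X)=\xi^2\Phi(\xi)$. For $\rho\in(0,1)$ I would expand $\psi(q)$ in a power series in $q^{\rho/(1+\rho)}$ near $q=0$ and invert term by term using $\mathcal{L}^{-1}\{q^{-s}\}(\xi)=\xi^{s-1}/\Gamma(s)$; applying Euler's reflection $\Gamma(s)\Gamma(1-s)=\pi/\sin(\pi s)$ converts the coefficients into the Gamma--sine combination appearing in \eqref{Grhorep}.

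Third, the asymptotics \eqref{Grhominus}--\eqref{Grhoplus} follow from complex-analytic arguments on $\psi$. The behaviour of $G_\rho$ as $X\to-\infty$ (equivalently $\xi\to 0^+$) is dictated by the large-$q$ behaviour of $\psi$ and, after the substitution $G(X)=\xi^2\Phi(\xi)$, is read off from the leading term of the series \eqref{Grhorep}, producing the exponential rate $\rho/(1+\rho)$ and the stated constant. The behaviour as $X\to+\infty$ (equivalently $\xi\to\infty$) is controlled by the nearest singularity of $\psi$ in the complex $q$-plane, extractable by a Watson-type lemma or saddle-point analysis on the Bromwich contour; this yields the exponential decay rate $\rho$ and the prefactor $\Gamma(1+\rho)\sin(\pi(1-\rho))/\pi$ in \eqref{Grhoplus}.

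The main obstacle is not conceptual but bookkeeping: carefully tracking the $\Gamma$ and trigonometric factors through the inversion and asymptotic expansion so that they line up exactly with the constants stated in \eqref{G1}--\eqref{Grhoplus}, and justifying the term-by-term Laplace inversion of the fractional-power series. The structural steps---reducing to a solvable ODE, inverting the transform, and reading off asymptotics---are standard for the additive kernel and require no further machinery beyond the reflection formula and contour integration.
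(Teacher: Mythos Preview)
The paper does not give its own proof of this proposition; it simply quotes the result from \cite{MePe04} and moves on. Your outline via Laplace transform is exactly the classical route used there: reduce \eqref{ssequation2} for the additive kernel to a first-order ODE for the transform, solve it in closed form, invert termwise using the reflection formula, and read off the asymptotics from the leading series term and the nearest singularity. So your approach matches the cited source, and there is nothing further to compare against in the present paper.
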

Thus, for any $b \in [2,\infty)$ a self-similar solution with unit mass exists. 
Notice, that the rescaled function $G_{m,\rho}(X):=mG_{\rho}(X)$ has mass $m$ and solves \eqref{Gequation} with $b=m \frac{1{+}\rho}{\rho}$. 

For the additive kernel it is also easily seen that whenever $G$ satisfies $\int_{-\infty}^{\infty} e^{X} G(X)\,dX < \infty$, then it must hold that $\rho=1$. This follows, since multiplying
\eqref{Gequation} by $e^{X}$ and integrating gives $\frac{\rho+1}{\rho} M_2= 2M_1 M_2=2M_2$, where we use the notation $M_i=\int_{-\infty} e^{(i-1)X}G(X)\,dX$. (These quantities correspond to the
$i$-th moments of $\Phi(x)$).
%
\subsection{Nonsolvable class II kernels}
%
 
We now assume that $K$ is a general kernel of homogeneity one that satisfies $\lim_{\xi \to 0} K(\xi,1) =k_0 >0$. Without loss of generality we assume in the following $k_0=1$.

\paragraph{Heuristics of asymptotic behaviour.}

 We first show that, if a solution $G_{\rho}$ to \eqref{Gequation} exists that satisfies $G_{\rho}(X)\sim e^{- \rho X}$ as $X \to \infty$,  then necessarily $b= \frac{\rho+1}{\rho}$.
 (Notice that in general we obtain the relation $b=k_0\frac{1+\rho}{\rho}$).
For that purpose we split the integral 
 \[\int_{-\infty}^X \int_{X+\ln(1-e^{Y-X})}^{\infty}\,dZ\,dY = \int_{-\infty}^X \int_{X}^{\infty}\,dZ\,dY + \int_{-\infty}^X \int_{X+\ln(1-e^{Y-X})}^{X}\,dZ\,dY.
  \]
Since $K(\xi,1) \approx 1$ for small $\xi$ and if $G_{\rho}(X) \sim e^{-\rho X}$ as $X \to \infty$, then (recall that $\int_{-\infty}^{\infty} G(X)\,dX=1$)
 \[
  \int_{-\infty}^X \int_{X}^{\infty} K(e^{Y-Z},1) G_{\rho}(Y) G_{\rho}(Z) \approx  \int_{-\infty}^X G_{\rho}(Y)\,dY \int_{X}^{\infty} G_{\rho}(Z)\,dZ
  \approx \frac{1}{\rho} e^{-\rho X}
 \]
as $X \to \infty$.
Furthermore, changing the order of integration, 
\[
\begin{split}
 \int_{-\infty}^X \int_{X+\ln(1-e^{Y-X})}^{X}&K(e^{Y-Z},1) G_{\rho}(Y)G_{\rho}(Z)\,dZ\,dY\\
 &= \int_{-\infty}^X G_{\rho}(Z)\int_{X+\ln(1-e^{Z-X})}^{X}K(e^{Y-Z},1)G_{\rho}(Y)\,dY\,dZ\\
 & \approx G_{\rho}(X) \int_{-\infty}^X G_{\rho}(Z) \int_{X+\ln(1-e^{Z-X})}^{X}K(e^{Y-Z},1)\,dY\,dZ \\
& \approx  G_{\rho}(X) = e^{-\rho X} \qquad \mbox{ as } X \to \infty.
\end{split}
\]
Here we used that
\[
 \int_{X+\ln(1-e^{Z-X})}^{X}K\big(e^{Y-Z},1\big)\,dY = \int_{x-z}^x\frac{1}{y} K\Big( \frac{y}{z},1\Big) \,dy = \frac{1}{z} \int_{x-z}^x K\Big(1,\frac{z}{y}\Big)\,dy \approx
 \frac{1}{z} \int_{x-z}^x\,dy=1\,.
\]
As a consequence, $b= \frac{1+\rho}{\rho}$ if such a solution exists.

Now assume that $G_{\rho}(X) \sim e^{a X}$ as $X \to -\infty$. Then, as above, 
 \[
  \int_{-\infty}^X \int_{X}^{\infty} K(e^{Y-Z},1) G_{\rho}(Y) G_{\rho}(Z) \approx \int_{-\infty}^X G_{\rho}(Y)\,dY \int_{X}^{\infty} G_{\rho}(Z)\,dZ
  \approx \frac{1}{z} e^{-a X} \sim \frac{1}{a}e^{a X} 
 \]
as $X \to -\infty$. The term  $\int_{-\infty}^X \int_{X+\ln(1-e^{Y-X})}^{X}K(e^{Y-Z},1) G_{\rho}(Y)G_{\rho}(Z)\,dZ\,dY$ give in this case  a contribution of higher order.
Hence, we have that $a = \frac{\rho}{1+\rho}$.

Thus, we see that if solutions for a given $b>0$ exist and if they have some exponential behaviour as $X \to \pm \infty$, then the relation between $b$ and the exponents is the 
same as in the case of the additive kernel.

 \paragraph{Conjecture on existence of solutions.}
 For a general class-II kernel $K$, we  conjecture that  there is a critical $\rho_*\in (0,\infty]$, depending on $K$,  such that for any $\rho \in (0,\rho_*)$ there
 exists a solution $G_{\rho}$ to \eqref{Gequation} that satisfies for $\rho \in (0,\rho_*)$ that $G_{\rho}(X) \sim e^{-\rho X}$ as $X \to \infty$, while for $\rho=\rho_*$ it decays double
 exponentially. If $\rho_*<\infty$ we furthermore conjecture, that there exists $\rho_{**} \geq \rho_*$ such that  there is no nonnegative solution to \eqref{Gequation} for $\rho>\rho_{**}$. 

 \medskip
 Notice that the 'fat-tail' solutions for $\rho<\rho_*$ are different from the self-similar solutions with fat tails for kernels with homogeneity $\gamma <1$. In the latter
 case, if $\gamma \geq 0$ and some structural assumptions on the kernel are satisfied, there exist
 self-similar solutions \cite{NV12a,NTV16} of  the form $f(t,\xi) = t^{r}F(\xi t^{-s})$ with $F(z) \sim z^{-(1+\rho)}$ with
 $\rho \in (\gamma,1)$, $s=\frac{1}{\rho-\gamma}$ and $r= 1+(1+\gamma) s$. Hence $f(t,\xi) \sim 
 A \xi^{-(1+\rho)}$ as $\xi \to \infty$ with a time-independent constant $A$. In contrast to that, consider 
 self-similar solutions for class II kernels with homogeneity one
 with a profile
 that decays as $x^{-(2+\rho)}$. Such solutions  have time dependent tails of the form $A(t)x^{-(2+\rho)}$ with $A(t)=e^{M_1(1+\rho)t}$, where $M_1$ denotes the mass.
 As a consequence, we expect that any existence proof of such solutions should be different from the ones
 that exist for 'stationary' fat tails.
 
 \medskip
 We conclude this section by a justification of our  conjecture  that the critical $\rho_*$ is in general not equal to one, but rather depends on the details of 
 the kernel.
 For that purpose we consider a kernel that is a perturbation of the additive one and look for solutions
 that are perturbations of the explicit solution of the additive kernel for  $b=2$, given in \eqref{G1}. We will see that in general this perturbation will have
 a polynomial decay if we fix $b=2$. In order to get exponential decay it is therefore necessary to change $b$ as well, which leads to a critical value $\rho_*$ that
 is different from one and possibly even infinity.
 
 For the corresponding computations it is more convenient to go back to the original self-similar variable $x$. This will allow us to obtain the properties of
 the solution to the linearized problem by using the Laplace transform.
We are looking for solutions of  \eqref{ssequation2} where $K=K_{\eps}$ is a   perturbation of the additive kernel,
 \begin{equation}\label{perturb1}
  K_{\eps}(x,y)=x+y + \eps W(x,y)\,,
 \end{equation}
where $W$ has homogeneity one. We assume that $W$ is smooth and has compact support.

We linearize around the explicit solution for $\rho=1$ for the additive kernel
\begin{equation}\label{perturb2}
 \bar \Phi(x) = \frac{1}{\sqrt{2\pi}} x^{-3/2} e^{-\frac{x}{2}}\,,
\end{equation}
that is we make the ansatz
\begin{equation}\label{perturb3}
 \Phi(x)=\bar \Phi(x) + \varphi(x) \qquad \mbox{ with } \qquad \int_0^{\infty} x \varphi(x)\,dx=0\,.
\end{equation}
If we plug \eqref{perturb3} into \eqref{ssequation2} with $b=2 + \mu {\eps}$ and neglect higher order terms, we find the linear equation for $\varphi$,
\begin{equation}\label{perturb4}
\begin{split}
 2 x^2 \varphi(x)& =\int_0^x \int_{x-y}^{\infty} y(z+y) \big( \bar \Phi(y) \varphi(z) + \bar \Phi(z) \varphi(y)\big)\,dz\,dy \\
 &\qquad \qquad + \eps \int_0^x\int_{x-y}^{\infty} y W(y,z)\bar \Phi(y) \bar \Phi(z)\,dz\,dy - \mu \eps x^2 \bar \Phi(x)\,,
 \end{split}
\end{equation}
where the  second line of \eqref{perturb4} contains the  source terms.

Since $\bar \Phi$ is not integrable, we expect that the same is true for  $\varphi$ and hence that 
its Laplace transform at zero is not defined. Instead, we introduce
\begin{equation}\label{perturb5}
 \tilde \varphi(p) = \int_0^{\infty} \big(1-e^{-pz}\big)\varphi(z)\,dz\,,
\end{equation}
multiply \eqref{perturb4} with $e^{-px}$ and integrate. We call $U(p)=\int_0^{\infty} \big(1-e^{-pz}\big)\bar \Phi(z)\,dz$ and notice, that the properties
of $\bar \Phi$ imply that $U(0)=0$ and $\frac{d}{dp} U(0)=1$. We obtain
\begin{equation}\label{perturb6}
 \Big( \frac{U(p)}{p}-2\Big) \frac{d^2}{dp^2} \tilde \varphi(p) = - \frac{1}{p}  \frac{d}{dp} \tilde \varphi(p) - 
 \frac{1}{p} \frac{d^2}{dp^2} U(p) \tilde \varphi + \tilde \lambda(p)+ \mu\eps  \frac{d^2}{dp^2} U(p)  \,, 
\end{equation}
where $\tilde \lambda$ contains the transforms of the source terms coming from $W$. 

As $p \to 0$ we obtain to leading order, since $U(p) \sim p$ as $p \to 0$, that
\begin{equation}\label{perturb7}
 \frac{d^2}{dp^2}\tilde \varphi - \frac{1}{p} \frac{d}{dp}\tilde \varphi + \frac{\kappa}{p} \tilde \varphi = \tilde \lambda (0)+\mu\eps \kappa\,,
\end{equation}
where $\kappa:= \frac{d^2}{dp^2}U(0)$. The solution of \eqref{perturb7}
 behaves as $\tilde \varphi \sim p^2 \ln p$ as $p \to 0$, which corresponds to $\varphi(x) \sim x^{-3}$ as $x \to \infty$. 
Hence, in order to obtain an exponentially decaying solution, one needs in general to choose $\mu$ different from zero in order to cancel this slow decay behaviour.
This indicates that the critical value $\rho_*$ is not necessarily equal to one as in the case of the additive kernel, and
is determined by an  eigenvalue problem, which is to find the critical $b$ in \eqref{Gequation} that yields fast decay of the solution.
It remains a challenging task to determine this critical $b$, at least for general kernels that are not perturbations of the solvable one.

%
\section{Class-I  kernels} \label{S.classI}
%
%
\subsection{Heuristics}\label{Ss.heuristics}
%
We have seen in the Introduction that for class-I kernels there are no self-similar solutions with finite mass.  In order to 
understand what happens to a solution with finite mass  in the long time limit, it is  convenient to look at the formulation \eqref{uequation}. 
 Together with the requirement that the integral of $u$ is finite, it suggests to consider the  rescaled function  $u_{\eps}(\tau,\tilde X) = 
 \frac{1}{\eps} u(\frac{\tau}{\eps^2},\frac{\tilde X}{\eps})$,  where $0<\eps\ll1$ is a scaling parameter. This yields
 \begin{equation}\label{uepsequation}
  \begin{split}
 \partial_{\tau} u_{\eps}& = - \partial_{\tilde X}\Big( \frac{1}{\eps^2} \int_{-\infty}^{\tilde X} \int_{\tilde X + \eps \ln \big( 1-e^{\frac{Y-\tilde X}{\eps}}\big)}^{\infty}
  K\big(e^{\frac{Y-Z}{\eps}},1\big) u_{\eps}(Y) u_{\eps}(Z)\,dZ\,dY\Big)\\
  &= - \partial_{\tilde X}
\Big( \frac{1}{\eps^2} \int_{-\infty}^{0} \int_{\eps \ln \big( 1-e^{\frac{Y}{\eps}}\big)}^{\infty}
 K\big(e^{\frac{Y-Z}{\eps}},1\big) u_{\eps}(\tilde X + Y) u_{\eps}(\tilde X+Z)\,dZ\,dY\Big)\\
  &\approx - c_0\partial_{\tilde X} \big(u_{\eps}(\tilde X)^2\big)\,
    \end{split}
 \end{equation}
 and we conclude that $u_{\eps}$ approximately solves the Burgers equation.
 In \eqref{uepsequation} we used in the last step that for any continuous function $\varphi$ with compact support we have
 \[
 \begin{split}
  \int_{-\infty}^{0} &\int_{\eps \ln \big( 1-e^{\frac{Y}{\eps}}\big)}^{\infty}
 \frac{ K\big(e^{\frac{Y-Z}{\eps}},1\big) }{\eps^2}\varphi(Y)\varphi(Z)\,dZ\,dY\\
 &=
 \int_{-\infty}^0 \int_{\ln(1-e^Y)}^{\infty} K\big(e^{y-z},1\big)\varphi(\eps y) \varphi(\eps z)\,dy\,dz \to c_0 \varphi^2(0)
 \end{split}
 \]
 as $\eps \to 0$ where $c_0=\int_{-\infty}^0 \int_{\ln(1-e^Y)}^{\infty} K\big(e^{y-z},1\big)\,dy\,dz<\infty $. In other words, on the domain of integration $\{Y<0,Z>\eps 
 \ln (1-e^{\frac{Y}{\eps}})\}$ the rescaled kernels
 $\eps^{-2}K \big(e^{\frac{Y-Z}{\eps}},1\big) $ converge to a multiple of a Dirac distribution.
 Of course, it is far from obvious that $u_{\eps}$ is sufficiently smooth such that the previous argument applies and that hence
 the long-time behaviour of finite mass solutions should be the same as for the Burgers equation. It is even less obvious that solutions to the coagulation equation
 provide an approximation to solutions of the Burgers equation that yield entropy solutions in the limit.

 Before we are going to discuss several aspects of solutions of the coagulation equation for $K_{\alpha}$ as in \eqref{kernelfamily} and present corresponding simulations in
 Section \ref{Ss.general}, we will first
  consider  in Section \ref{Ss.diagonal} the special case of the diagonal kernel
  with  homogeneity one. In this case  one can describe the long-time behaviour of solutions rigorously.

%
\subsection{The diagonal kernel}\label{Ss.diagonal}
%
 The diagonal kernel with homogeneity one is given by 
\eqref{DiagonalKernel}
for which   \eqref{eq1} reduces to 
$ \partial_t f(t,\xi)= \big(\xi/2\big)^2 f(t,\xi/2)^2 - \xi^2 f(t,\xi)^2$. In this case it is more convenient to introduce the new variables via 
$x=2^X$ and rescale time by $\ln2$, such that the equation for $u(t,X)=\xi^2f(t/\ln2,\xi)$
reads
\begin{equation}\label{diagonal1}
 \partial_t u(t,X) = u(t,X{-}1)^2 - u(t,X)^2\,.
\end{equation}
Hence the evolution in a point $X$ depends only on 
the evolution of the discrete values $X{-}j$ with $ j \in \mathbb{N}$. We  consider first the infinite system
\begin{equation}\label{diagonal2}
 \dot u_j(t)= u_{j-1}^2(t) - u_j^2(t)\,, \qquad j \in \mathbb{Z},
\end{equation}
which can also be interpreted as an upwind discretization of the Burgers equation. 
\paragraph{Formal asymptotics.}
Equation \eqref{diagonal2} has already been  analyzed by BenNaim and Krapvisky \cite{BenNaimKrap12} via formal asymptotics and 
numerical analysis. They consider two type of data: nonnegative integrable data with unit mass on the one hand and a decreasing step function, connecting the values one and zero, on the other
hand. In the second case, 
they predict convergence of the solution to a monotone traveling wave $G$  which  satisfies  $G'(z)=G(z{-}1)^2-G(z)^2$. $G$ has the property that 
 $G(z) \sim e^{bz}$ as $z \to -\infty$, while it decreases double exponentially, that is $G(z)\sim 2^z e^{-\gamma 2^z}$ as $z \to \infty$. Figure \ref{fig:twdiagonal} shows
convergence to the traveling wave for Riemann data.

\begin{figure}[h!]
  \centering
  \includegraphics[width=.95\textwidth]{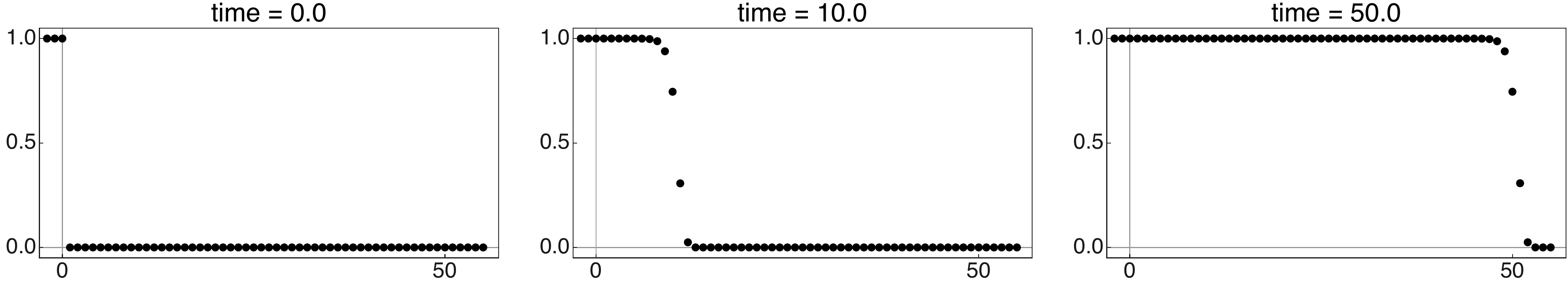}
  \caption{ Numerical solution to the Burgers lattice \eqref{diagonal2} with Riemann initial data: snapshots of $u_j(t)$ for several times $t$. For $t\to\infty$ 
  the lattice data approach a monotone travelling wave which propagates with speed $1$.}
  \label{fig:twdiagonal}
\end{figure}

 \medskip
 In the  case of integrable data, BenNaim and Krapivsky predict that for large times 
$u_j(t) \sim A_j t^{-1}$ for $0 \leq j \leq j_*=2\sqrt{t}-1/4\ln t$, where $A_j \sim \frac{j}{2} + \frac{\ln j}{4} + O(1)$, that is $u_j$ behaves to leading order as the $N$-wave with
the same mass as the initial data, but there are logarithmic corrections. Furthermore, the behavior at the front $j_*$ is predicted to be given by $u_j(t)\sim \frac{1}{\sqrt{t}} G(j-j_*)$, where $G$ is the
traveling wave profile studied before. 

We now give the outline of a short proof that establishes convergence to an $N$-wave of solutions to \eqref{diagonal2} for summable nonnegative
initial data.

\paragraph{Rigorous proof of convergence to an $N$-wave for the lattice \eqref{diagonal2}.}
We consider solutions to \eqref{diagonal2} with initial data $(u_j^0)$ that satisfy
\begin{equation}\label{data}
 u_j^0 \geq 0 \qquad \mbox{ for all } j \in \Z \qquad \mbox{ and } \qquad \sum_{j\in \Z} u_j^0 = M\,.
\end{equation}

We introduce the function $U_{\eps}$ as the piecewise constant function given by
\begin{equation}\label{Uepsdef}
 U_{\eps}(\tau,x)= \frac{1}{\eps} u_j(t)\,, \qquad \mbox{ with } \;\tau= \eps^2 t \quad \mbox{ and} \quad j=\Big \lfloor \frac{x}{\eps}\Big \rfloor\,.
\end{equation}

Furthermore, we denote by $N$ the continuous nonnegative $N$-wave with unit mass, 
that is
\begin{equation}\label{Ndef}
N(x;M)= \frac{x}{2} \chi_{[0,2\sqrt{ M}]}(x)\,.
\end{equation}

\begin{prop}\label{P.nwave} We have
\begin{equation}\label{mainresult}
\sup_{\tau \in (0,T)} \int_{\R}\Big| U_{\eps}(\tau,x)-\frac{1}{\sqrt{\tau}}N\big (\frac{x}{\sqrt{\tau}}\,;M\big)\Big|\,dx \to 0 \qquad \mbox{ as } \eps \to 0 
\end{equation}
or equivalently
\begin{equation}\label{mainresult2}
 \sum_j \Big| u_j(t)-\frac{1}{\sqrt{t}} N\Big(\frac{j}{\sqrt{t}};M\Big)\Big| \to 0 \qquad \mbox{ as } t \to \infty\,.
\end{equation}

\end{prop}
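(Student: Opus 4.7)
The plan is to view the lattice \eqref{diagonal2} as a conservative monotone upwind semi-discretization of the inviscid Burgers equation $\partial_\tau U + \partial_x(U^2)=0$ and to apply a compactness-plus-entropy-uniqueness argument to the rescaled family $U_\eps$. It is technically convenient to work simultaneously with the Hamilton--Jacobi form: introducing $S_j(t) := \sum_{k\le j} u_k(t)$, summation of \eqref{diagonal2} telescopes to
\[
\dot S_j(t) = -\bigl(S_j(t) - S_{j-1}(t)\bigr)^2,
\]
with $S_j \in [0, M]$ nondecreasing in $j$ for all $t \ge 0$. The associated rescaling $S_\eps(\tau, x) := S_j(t)$, $\tau=\eps^2 t$, $j=\lfloor x/\eps \rfloor$, is a monotone consistent scheme for $\partial_\tau S + (\partial_x S)^2 = 0$, and $U_\eps = \partial_x S_\eps$ in the discrete sense.

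The decisive a priori estimate is a discrete Oleinik one-sided Lipschitz bound. Writing $d_j := u_{j+1} - u_j$, one computes
\[
\dot d_j = -d_j(u_{j+1} + u_j) + d_{j-1}(u_j + u_{j-1}) = 2u_j(d_{j-1} - d_j) - d_j^2 - d_{j-1}^2.
\]
Since $u_j \to 0$ as $|j|\to\infty$, the sup of $d_j^+$ is attained at some $j_*$ whenever it is positive; at such an index the first term on the right is $\le 0$, so $\dot d_{j_*} \le -d_{j_*}^2$. A standard Danskin/viscosity argument then yields $\sup_j(u_{j+1}(t)-u_j(t))^+ \le 1/t$. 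Combined with $u_j\ge 0$ and $\sum_j u_j = M$, summing $u_{j_0-k} \ge u_{j_0} - k/t$ over $k=0,\dots,\lfloor u_{j_0}t\rfloor$ gives $\|u(t)\|_{\ell^\infty} \le \sqrt{2M/t}$. In rescaled variables these become the Oleinik bound $U_\eps(\tau,x+\eps)-U_\eps(\tau,x) \le \eps/\tau$ and the $L^\infty$ bound $\|U_\eps(\tau,\cdot)\|_{L^\infty} \le \sqrt{2M/\tau}$, uniform in $\eps$.

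Compactness of $\{S_\eps\}$ on $(0,T)\times\R$ is automatic since $S_\eps$ is uniformly bounded by $M$, monotone in $x$, and $L^1$-equicontinuous in $\tau$ via the equation. Applying the Barles--Souganidis theorem for monotone consistent schemes (or a direct Kruzhkov computation), every subsequential limit $S$ is the unique viscosity solution of $\partial_\tau S + (\partial_x S)^2 = 0$ whose initial trace is the weak-$\ast$ limit $M\chi_{[0,\infty)}(x)$ of the initial CDFs. The Hopf--Lax formula yields explicitly
\[
S(\tau,x) = \min\!\Bigl(\tfrac{x_+^2}{4\tau},\, M\Bigr),
\]
so $U = \partial_x S = (x/(2\tau))\chi_{[0,2\sqrt{M\tau}]}(x) = \tau^{-1/2} N(x/\sqrt\tau; M)$, identifying the limit as the N-wave and giving \eqref{mainresult} along any subsequence; since the limit is unique, the full family converges.

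The hardest step is the OSLC bound: one must justify the calculation $\dot d_{j_*} \le -d_{j_*}^2$ when the sup is not attained in a classical sense and must handle initial data with large positive jumps, typically by truncating to compactly supported initial data and invoking the $L^1$-contraction intrinsic to the monotone scheme. The upgrade from the $L^1_{\mathrm{loc}}$ convergence above to the uniform-in-$\tau$ global $L^1$ bound in \eqref{mainresult} then follows from tightness, which is a consequence of finite propagation in the rescaled problem: since the effective characteristic speed $2U_\eps$ is uniformly bounded by $2\sqrt{2M/\tau}$, no mass escapes to spatial infinity on bounded $\tau$-intervals, and the equi-integrability needed to rule out loss of mass at $\pm\infty$ is controlled by a simple moment estimate derived directly from \eqref{diagonal2}.
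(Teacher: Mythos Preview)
Your strategy is sound and represents a genuinely different route from the paper's. The paper works directly with the density $U_\eps$: after establishing the same Oleinik one-sided bound and the decay $\|u(t)\|_{\ell^\infty}\lesssim t^{-1/2}$ that you derive, it obtains $L^1$-compactness of $U_\eps$ in $C([\tau_1,\tau_2];L^1(\R))$ via two further lattice estimates (the contraction $\tfrac{d}{dt}\sum_j|u_{j+k}-u_j|\le 0$ for space equicontinuity and $\sum_j|\dot u_j|\lesssim 1/t$ for time equicontinuity), shows that any limit is an entropy solution of Burgers with trace $\delta_0$ at $\tau=0$, and then invokes the Liu--Pierre uniqueness theorem for entropy solutions with measure initial data to identify the $N$-wave. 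Your approach instead passes to the primitive $S_j$, where the scheme is monotone and the Barles--Souganidis machinery together with the explicit Hopf--Lax formula replaces both the compactness argument and the Liu--Pierre reference. This is arguably cleaner on the identification side, but two points that the paper's route handles more directly deserve attention: first, the standard Barles--Souganidis statement assumes continuous initial data, so the discontinuous limit $M\chi_{[0,\infty)}$ requires an extra step (comparison with shifted Heaviside data, or working on $(0,T]$ and recovering the trace separately); second, locally uniform convergence of $S_\eps$ does not by itself yield the $L^1$-convergence of $U_\eps=\partial_x S_\eps$ asserted in \eqref{mainresult}, so you still need the Oleinik bound and tightness to upgrade distributional convergence of the derivative to strong $L^1$-convergence. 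Both gaps are fixable, but they are not quite as automatic as your sketch suggests.
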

\begin{proof}
We follow the strategy employed, for instance, in \cite{Ignatetal15}, where the convergence of different discretization schemes of the Burgers equation is considered. 
Without loss of generality we assume now $M=1$. We also note that the maximum principle implies that $u_j(t) \geq 0$ for all $j \in \Z$ and $t >0$. Furthermore
we have $\sum_j u_j(t)=\int U_{\eps}(\tau,x)\,dx =1$. 

\smallskip
{\it Step 1: (Entropy condition)} We first show that
\begin{equation}\label{step1}
  \big( u_{j+1}(t)-u_j(t)\big)_+ \leq \frac{1}{\frac{1}{\sup_j (u_{j+1}^0-u_j^0)}+t}\,\qquad \text{for all} \quad j\in\Z\quad\text{and}\quad t\geq0.
\end{equation}
If we define $w_j:=u_{j+1}-u_j$ then $w_j$ satisfies
\[
  \dot w_j = -w_j^2 -w_{j-1}^2 -2u_j(w_j-w_{j-1})
\]
and the statement follows by comparison with the solution of the ODE $y'+y^2=0$.

\smallskip
{\it Step 2: (Decay estimate)}  We establish the temporal decay rate
\begin{equation}\label{step2}
 \sup_j u_j(t) \leq \frac{C}{\sqrt{t}}\,.
\end{equation}
The main idea is that \eqref{step1}  bounds  the increase of $u_j$ in $j$ by $\frac{1}{t}$. Hence, if  $\alpha:= \max_j u_j(t)$ and  $k$ is an index in which the maximum
is attained, then $u_j$ is bounded below by the line $\alpha +\frac{j-k}{t}$. Then we have with $k_0=\lfloor k-\alpha t\rfloor$ that
\[
 1 \geq \sum_{k_0}^k u_j \geq \sum_{k_0}^k \alpha + \frac{j-k}{t} \geq \alpha^2t - \sum_{k_0}^k \frac{j-k}{t} \geq \frac{1}{2} \alpha^2 t - C\,,
\]
whence the claim follows. 

\smallskip
{\it Step 3:(Compactness in space)}  We have
\begin{equation}\label{step3}
 \frac{d}{dt} \sum_j \big| u_{j+k} -u_j\big| \leq 0 \qquad \mbox{ for all } k \in \Z\,,
\end{equation}
 which follows from 
\[
 \begin{split}
 \frac{d}{dt} \sum_j&\big| u_{j+k} -u_j\big| = \sum_j \mbox{sgn}(u_{j+k}-u_j) \big( -u_{j+k}^2 + u_{j+k-1}^2 + u_j^2-u_{j-1}^2\big)\\
 &= \sum_j \mbox{sgn} (u_{j+k}-u_j) \big( (u_{j+k}+u_j)(u_j-u_{j+k}) + (u_{j+k-1}-u_{j-1})(u_{j+k-1}+u_{j-1})\big)\\
 & \leq -\sum_j |u_{j+k}-u_j| + \sum_j |u_{j+k-1}-u_{j-1}|=0\,.
 \end{split}
\]

\smallskip
{\it Step 4: (Compactness in time)} It holds
\begin{equation}\label{step4}
 \sum_j \big| \dot u_j\big| \leq \frac{2 }{{c_0}+t}\,.
\end{equation}
If we define $J_{+}:= \{ j \in \Z\,:\, \dot u_j > 0\}$, then by mass conservation $\sum_{j \in J_+} |\dot u_j| = \sum_{j \notin J_+} |\dot u_j|$. Furthermore 
 \eqref{step1} implies that $0\leq u_j-u_{j-1} \leq \frac{1}{{c_0}+t}$ for $j \notin J_+$. Hence $\sup_{j \notin J_+} |\dot u_j| = \sum_{j \notin J_+} (u_j+u_{j-1})(u_j-u_{j-1}) \leq \frac{C}{c_0+t}$.

\smallskip
{\it Step 5: (Tightness)} As in \cite{Ignatetal15} we conclude that $U_{\eps}(\tau,\cdot)$ is tight, more precisely that
\begin{equation}\label{step5}
 \int_{|x| \geq 2R} U_{\eps}(\tau,x)\,dx \leq \int_{|x|\geq R}U_{\eps}(0,x)\,dx + \frac{C \sqrt{t}}{R}\,.
\end{equation}
Indeed, if $\rho\colon \R \to [0,1]$ is a smooth cut-off function, with the properties that $\rho(x)=1$ for $x \geq 2$, $\rho(x)=0$ for $x\leq 1$ and $\rho'(x) \leq 1$, then we can estimate
\begin{align*}
 \frac{d}{dt} \int_{R}^{\infty} U_{\eps}(t,x) \rho\Big(\frac{x}{R}\Big)\,dx& = \int_{R}^{\infty} U_{\eps}(t,x)^2 \frac{\rho\big( \frac{x+\eps}{R}\big)
 -\rho\big(\frac{x}{R}\big)}{\eps}\,dx\\
 & \leq \frac{C}{R} \int_{R}^{\infty} U_{\eps}(t,x)^2\,dx 
\end{align*}
and \eqref{step5} thus follows from the bound \eqref{step2}.
\ignore{
As an alternative we give here another argument that implies that a limit of $U_{\eps}$ has compact support if one makes some stronger assumption on the initial data.
If we introduce $M(t):=\sum_j a^j u_j(t)$ for some $a>1$ and
assume in addition to \eqref{data} that $M(0) < \infty$, then we can use \eqref{step2} to compute
\[
 \frac{d}{dt} M(t) = \sum_j \big(a^{j+1}-a^j\big) u_j^2 \leq \frac{c_0 (a{-}1)M}{\sqrt{t}}\,,
\]
which implies
\begin{equation}\label{Mbound}
 M(t) \leq M(0) e^{2c_0(a-1)\sqrt{t}}= M(0) \alpha^{2 c_0 \frac{a-1}{\ln a} \sqrt{t}}\,.
\end{equation}
On the other hand
\[
 \sum_i a^i u_i(t)= \int \alpha^{\frac{x}{\eps}} U^{\eps} (\eps^2 t,x)\,dx
\]
and together with (\ref{Mbound}) we obtain
\[
 \int a^{\frac{x- 2 c_0 \frac{\alpha-1}{\ln \alpha}\sqrt{\tau}}{\eps}} U_{\eps}(\tau,x)\,dx \leq C\,.
\]
For $U:=\lim_{\eps \to 0} U_{\eps}$ this implies $U(\tau,x)=0$ for $x \geq 2 c_0 \frac{a-1}{\ln a}\sqrt{\tau}$.

If we assume for the initial data that $\sum_i e^{-i} u_i^0 < \infty$, which means in original coagulation variables that the total number of clusters
is initially finite, then we obtain directly (since the moment is decreasing) that $U = 0$ on $(-\infty,0)$.
}

\smallskip
{\it Step 6: (A priori estimates for $U_{\eps}$)}
The previous steps imply the following a-priori estimates for the rescaled function $U_{\eps}$. We have the uniform bound
$\|U_{\eps}(\tau,\cdot)\|_{L^{\infty}(\R)} \leq \frac{C}{\sqrt{\tau}}$
by \eqref{step2}, 
the entropy condition $\big( \partial_x U_{\eps}\big)_+ \leq \frac{1}{\tau}$ follows from \eqref{step1} and the uniform estimate on the time derivative
$\|\partial_{\tau}U_{\eps}(\tau,\cdot)\|_{L^1(\R)} \leq \frac{C}{\tau}$ is a consequence of \eqref{step4}. Moreover,  the equicontinuity 
\[
 \int_{\R} |U_{\eps}(\tau,x+h)-U_{\eps}(\tau,x)|\,dx \leq Ch
\]
follows from \eqref{step3}. Finally, \eqref{step5} provides tightness of $U_{\eps}$ locally uniformly in time.

\smallskip
{\it Step 7: (Compactness and limit equation)}
These estimates imply, using Riesz-Kolmogorov and Arzela-Ascoli, that the sequence $U_{\eps}$ is precompact in $C([\tau_1,\tau_2];L^1(\R))$ for
 arbitrary $0<\tau_1<\tau_2<\infty$. Thus, for a subsequence we have that $U_{\eps} \to U$ in  $C([\tau_1,\tau_2];L^1(\R))$ and it follows easily that
 $U$ is a weak solution of the Burgers equation. In addition it satisfies the same bounds as  $U_\eps$, in particular $\partial_x U \leq \frac{C}{t}$ which implies that $U$ is an entropy solution. The tightness estimate also implies $\int_{\R}U(\tau,x)\,dx=1$.
 
 \smallskip
 {\it Step 8: (Identification of the limit)}
Using  the equation for $u_j$, the estimate \eqref{step2} and mass conservation $\sum_j u_j=1$, we obtain the following weak continuity up to time $\tau=0$.
 For $\phi \in C^1_c(\R)$ 
 \[
 \Big| \int_{\R} \big( U_{\eps}(\tau+h,x)-U_{\eps}(\tau)\big) \phi(x)\,dx \Big| \leq C \|\phi'\|_{L^{\infty}} \sqrt{h} \qquad \mbox{ for all } \tau >0\,.
 \]
Hence, one can follow the lines of the elementary computations in Step II of the proof of Theorem 1.1 in \cite{Ignatetal15} to conclude that
$U(\tau,x) \to \delta_0$ as $\tau \to 0$. A key major ingredient to conclude the proof is  the uniqueness result for entropy solutions of the Burgers equation with 
initial data that are measures, provided in 
\cite{LiuPierre84}. It implies that  the limit $U$ is indeed the nonnegative $N$-wave with unit mass.
\end{proof}

 \begin{figure}[h!]
  \centering
  \includegraphics[width=.95\textwidth]{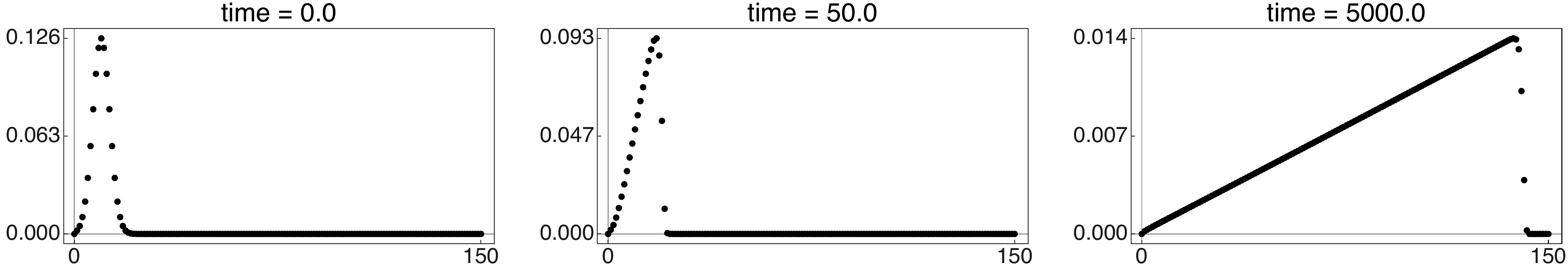}
\caption{ Numerical solution to the Burgers lattice \eqref{diagonal2} with integrable initial data: For larger times $t\gg1$,
the lattice data resemble an $N$-wave solution, see Proposition \ref{P.nwave}.}
  \label{fig:Nwavediagonal}
\end{figure}

 \paragraph{The coagulation equation \eqref{diagonal1}  as a family of lattices \eqref{diagonal2}.}
 As in \cite{LauNiVel16}, where the diagonal kernel with homogeneity smaller than one is considered,
 the analysis of  equation (\ref{diagonal1})  can be
reduced to the analysis of the  functions $\left\{ u(t,n+\theta ) \right\} _{n\in \mathbb{Z}}$ with $\theta \in \left[
0,1\right) .$ We will call each set of points $\left\{ X_{n}=n+\theta
:n\in \mathbb{Z}\right\}$  a fibre.  The union of all fibres with $\theta \in [ 0,1) $ covers the whole real line.
In spite of the fact that the dynamics given by independent fibres is
easy to describe, it yields interesting oscillatory behaviours and the onset
of peak-like solutions in some regions. We have seen that the $N$-waves
defined in \eqref{Ndef} 
depend on the mass contained in each fibre, i.e. on the number
$M(\theta) =\sum_{n=-\infty }^{\infty }u(t, n+\theta)$, which is constant in time for each fibre.
 Proposition \ref{P.nwave}
implies that the values $\left\{ u(t, n+\theta) \right\} _{n\in 
\mathbb{Z}}$ behave asymptotically as  an $N$-wave with mass $M(\theta)$,
that is
\begin{equation}
u( n+\theta ,t) \sim \frac{1}{\sqrt{t}}N\Big( \frac{n+\theta }{%
\sqrt{t}};M(\theta) \Big)  \qquad \mbox{ as } t\to \infty \,.
\label{A2}
\end{equation}%

Given that the support of the functions on the right-hand side of (\ref{A2})
depends on $\theta ,$ it follows that the function $u(t,X) $
increases linearly if $X\in \left[ 0,\sqrt{M_{\min }t}\right] ,$ where $%
M_{\min }=\min_{\theta \in \left[ 0,1\right] }M\left( \theta \right) .$ On
the other hand, if $X>\sqrt{M_{\min }t}$ we might have in each interval in
the $X$ variable with length $1$ regions where $u$ vanishes and regions
where $u$ is of order $\frac{1}{\sqrt{t}}.$ If we denote the integer part of
a real number $r\in \mathbb{R}$ as $\left\lfloor r\right\rfloor $ and the
fractional part as $\mbox{frac}\left( r\right) =r-\left\lfloor
r\right\rfloor $.\ Then (\ref{A2}) implies
\begin{equation}
u(t,X) \sim \frac{1}{\sqrt{t}}N\Big( \frac{X}{\sqrt{t}};M( 
\mbox{frac}(X) ) \Big) \qquad \mbox{ as } t \to
\infty\,.   \label{A3}
\end{equation}

Notice that this formula implies oscillatory behaviour in the variable $X$
for $u(t,X) $ if $X>\sqrt{M_{\min }t}$ in the generic case that  the function $M(\theta) $ is not constant (see Figure \ref{fig:uoscillations} for an illustration). 

 \begin{figure}[h!]
  \centering
  \includegraphics[width=.75\textwidth]{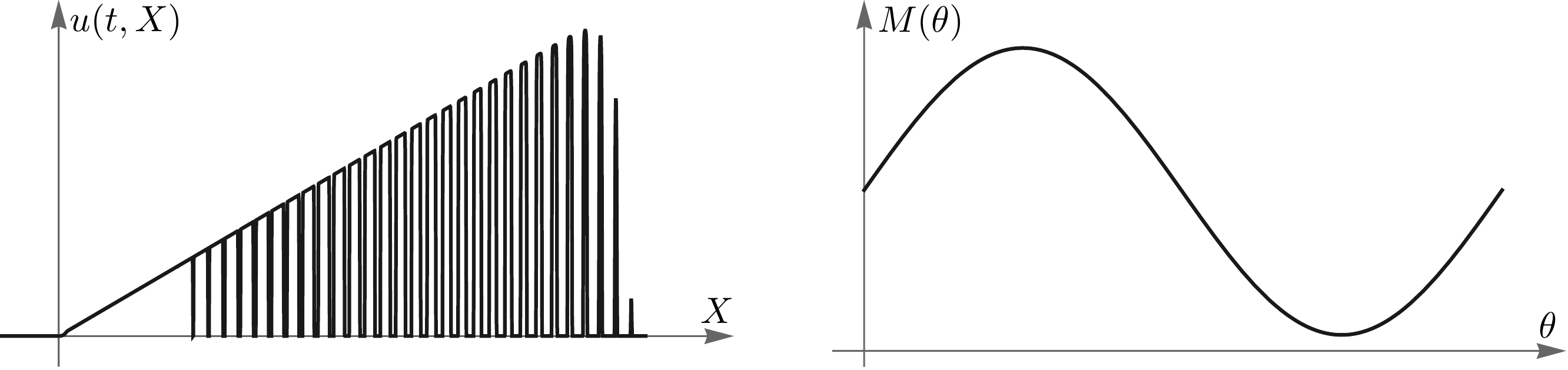}
\caption{ Cartoon of an highly oscillatory solution to the coagulation equation \eqref{diagonal1} that is composed by a $\theta$-family of N-wave solutions
to the lattice \eqref{diagonal2} with nonconstant mass distribution $M(\theta)$, see \eqref{A3}. }
  \label{fig:uoscillations}
\end{figure}

%
\subsection{The general case}\label{Ss.general}
%
 
 In this Section we compare the behaviour of
solutions of  \eqref{eq1} for kernels from the family $K_{\alpha}$ as in \eqref{kernelfamily}
 by means of formal asymptotics and numerical simulations. It turns out that the solutions of
\eqref{eq1} exhibit different features 
depending on whether $\alpha$ is small, i.e. $K_{\alpha}$ is close to the additive kernel, or $\alpha$ is large, i.e. $K_{\alpha}$
is close to the diagonal kernel.

A relevant feature of the case of large $\alpha $ is that the
homogeneous solutions are unstable, while they are stable for small $\alpha$ (with a threshold $\alpha_{{crit}}\approx 35$).

Also the shape of the traveling wave depends sensitively on the size of $\alpha$. Somewhat counterintuitively to the findings of the stability and instability respectively 
of the constant solution, for large $\alpha$ the numerical simulations suggest that the traveling wave is monotone, while for small $\alpha$ the traveling 
wave has oscillations on the left of the front, that
become large, when $\alpha $ becomes small. The threshold for oscillations to appear is $\alpha_{\ast} \approx 20.1$.

Finally we investigate in Section \ref{Ss.nwaves} the long-time behaviour of solutions with integrable data. For all values of $\alpha$ we observe
convergence to an $N$-wave. However, for small values of $\alpha$, there are strong oscillations on the left of the shock front (cf. the right picture in Figure \ref{fig:cartoon}).
This is explained by the expectation that the transition at the shock front is described by a rescaled traveling wave front which we have seen to be oscillatory for small $\alpha$.

\ignore{
      \begin{figure}[h!]
  \centering
  \includegraphics[width=.45\textwidth]{case_1_asymp}
\caption{Cartoon of large-time behaviour of solutions for kernels $K_{\alpha}$ with small $\alpha$.}
  \label{fig:cartoon1}
\end{figure}
 }
%
 \subsubsection{Instability of the constant solution for near-diagonal kernels}
 \label{Ss.stabilityneardiagonal}
%

Due to the property \eqref{finiteintegral} a constant is a solution of the coagulation equation \eqref{uequation}.
We examine the stability of this constant solution, that we can without loss of generality assume to be equal to $1$.
 If we plug the ansatz $u  =1+h(X)$ into \eqref{uequation} we obtain to leading order

 \begin{equation*}
\partial _{t} h(X)  +\partial _{X}\Big(
\int_{-\infty }^{0}dY\int_{\ln \big( 1-e^{Y}\big) }^{\infty }dZ\big[
K\big( e^{Y-Z},1\big) ( h( Y{+}X) +h( Z{+}X)) \big] \Big) =0\,.
\end{equation*}

We define the Fourier transform of $h$ by  $h( X) =\frac{1}{\sqrt{2\pi }}\int_{-\infty }^{\infty }H( k) e^{ikX}dk$ 
such that
\begin{equation}
\partial _{t}H(k) =M(k) H(k)\,,  \label{E0}
\end{equation}%
with
\begin{equation}
M( k) =-ik\int_{-\infty }^{0}dY\int_{\ln ( 1-e^{Y})
}^{\infty }dZ\Big[ K\big( e^{Y-Z},1\big) \big( e^{ikY}+e^{ikZ}\big) %
\Big]\,.  \label{E1}
\end{equation}
There are instabilities for particular wave numbers $k$ if $\mbox{Re}\big(M(k) \big) >0$. 

\ignore{
It is easy to find specific examples of
kernels $K$ close to the diagonal kernel for which the function $\mbox{Re}\left( M(k) \right) $ takes positive values for some ranges of $k.$
Suppose for instance that we consider the family of kernels
\begin{equation}
K_{\varepsilon }(x,y) =\frac{x^{2}}{2}\delta \left( x-\left(
1+\varepsilon \right) y\right) +\frac{y^{2}}{2}\delta \left( y-\left(
1+\varepsilon \right) x\right)\,.  \label{E3}
\end{equation}
After some elementary computations we find that $M_{\varepsilon }(k) $ as in  (\ref{E1}) is given by
\begin{equation}
M_{\varepsilon }(k) =-\frac{2+\varepsilon }{2}-%
\frac{1}{2}( 1+\varepsilon)^{ik}-\frac{ 1+\varepsilon}{2}( 1+\varepsilon )^{-ik}
+\frac{ 2+\varepsilon }{2}\left( \frac{2+\varepsilon }{1+\varepsilon }\right) ^{-ik}+\frac{%
 2+\varepsilon  }{2}( 2+\varepsilon )^{-ik}\,.
\label{E2}
\end{equation}
The real part of $M_{\varepsilon }(k) $ is plotted in  Figure
\ref{fig:instabilities}
for $\varepsilon =0.1$.
 Notice
that $\mbox{Re}\left( M( k) \right) $ is positive for several
values of $k.$ Hence, one can expect that perturbations of the constant solution $u=1$ 
develop patterns with characteristic lengths $\frac{2\pi }{k}$ where $k$
are the values where $\mbox{Re}\left( M(k) \right) >0.$

    \begin{figure}[h!]
  \centering
  \includegraphics[width=.4\textwidth]{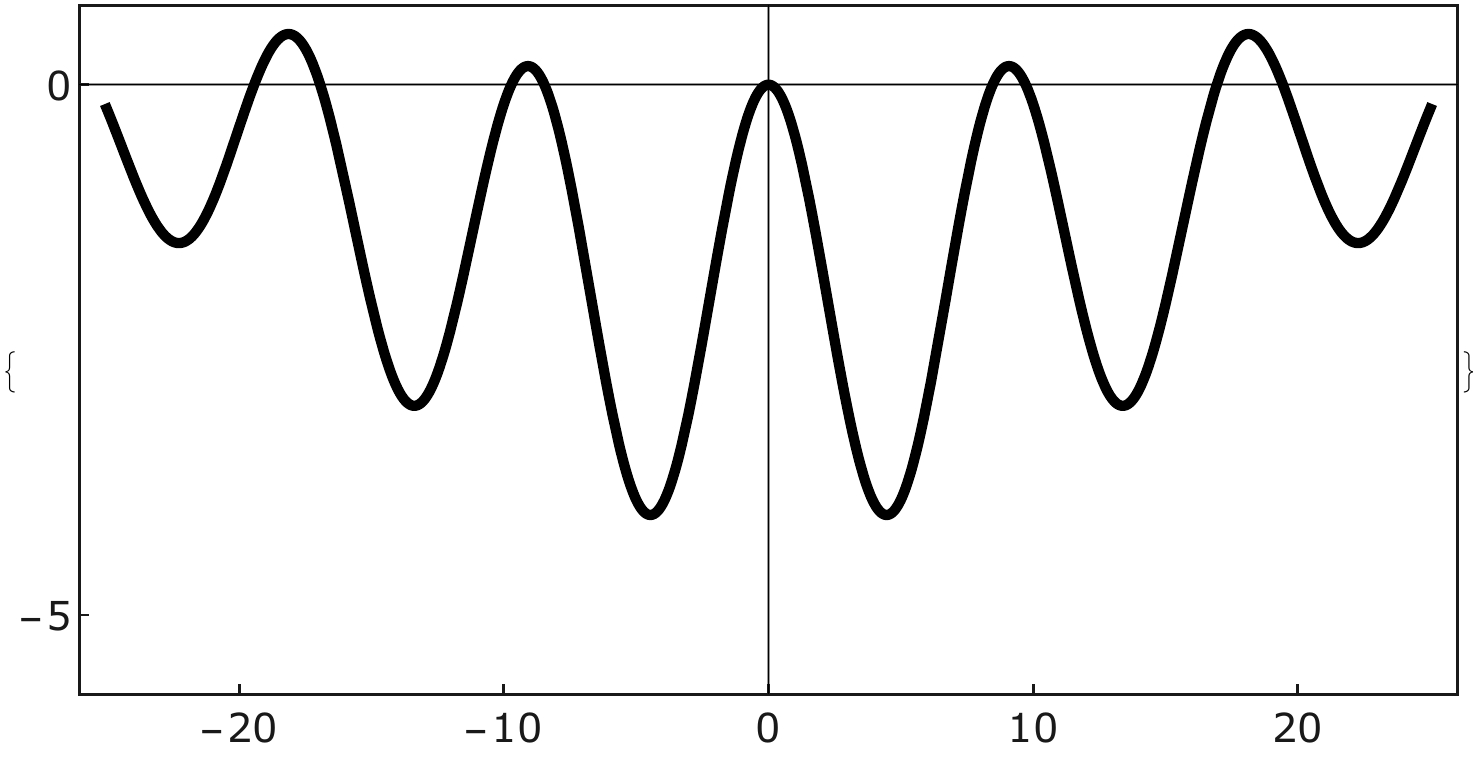}
\caption{Real part of $M(k)$ for the $\eps$-Kernel $K_{\eps}$ from \eqref{E3} with $\eps=0.1$.}
  \label{fig:instabilities}
\end{figure}
}
The following result  establishes the existence of unstable wave numbers
for a broad class of kernels.

\begin{prop}
\label{InstNearDiagonal}Suppose that $K$ is of the form
\begin{equation}
K(x,y) =( x+y) \eta \Big( \frac{x}{x+y}-\frac{1}{2}%
\Big) \,, \label{E4}
\end{equation}%
  where $\eta $ is a nonnegative, nonzero Radon measure with $\mathrm{supp}\,\eta (\cdot) \subset [ -\varepsilon ,\varepsilon]$, 
  $\eta (s) =\eta (-s) $ and  $\int_{\mathbb{R}%
}\eta (s) ds=1$.  Let $M(k) $ be as in (\ref{E1}) and $\eps>0$ sufficiently small. Then $\mbox{Re}\big( M\big( \frac{2\pi }{\ln  2 }\big) \big) >0$.
\end{prop}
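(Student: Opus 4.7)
The plan is to reduce $M(k)$ to a one-dimensional integral over the support of $\eta$ and then Taylor expand around the limiting case $\eta=\delta_0$, for which $M(2\pi/\ln2)$ vanishes exactly.

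\emph{Step 1 (reduction to a $1$D integral).} I would first substitute $\xi=Y-Z$ in \eqref{E1}, so that the domain of the inner integral becomes $\{\xi<\xi_{*}(Y)\}$ with $\xi_{*}(Y):=Y-\ln(1-e^Y)$. Using the structural form \eqref{E4} one has $K(e^\xi,1)=(e^\xi+1)\,\eta(s(\xi))$ with $s(\xi):=e^\xi/(e^\xi+1)-\tfrac12=\tfrac12\tanh(\xi/2)$, an increasing bijection from $\mathbb{R}$ onto $(-1/2,1/2)$ with inverse derivative $\xi'(s)=4/(1-4s^2)$. Setting $s_{*}(Y):=s(\xi_{*}(Y))$ and $\phi(s;k):=(e^{\xi(s)}+1)(1+e^{-ik\xi(s)})\,\xi'(s)$, the further change of variable $\xi\mapsto s$ in the inner integral, followed by integration by parts in $Y$ (with the substitution $t=s_{*}(Y)$ inside the derivative term), yields
\[
M(k)=\int_{-\varepsilon}^{\varepsilon}\eta(t)\,\phi(t;k)\bigl(e^{ikY(t)}-1\bigr)\,dt\,,
\]
where $Y(\cdot)$ is the inverse of $s_{*}\colon(-\infty,0)\to(-1/2,1/2)$. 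Direct computation gives $Y(0)=-\ln2$ and $Y'(0)=1/s_{*}'(-\ln2)=1/(\tfrac14\cdot2)=2$.

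\emph{Step 2 (expansion at $k_{*}=2\pi/\ln2$).} Since $e^{ik_{*}Y(0)}=e^{-2\pi i}=1$, the integrand vanishes at $t=0$; this is precisely why $M(k_{*})$ would be zero for $\eta=\delta_0$. To determine the sign for near-diagonal $\eta$ I would expand to second order. One checks directly that
\[
\phi(0;k_{*})=16\,,\qquad \phi'(0;k_{*})=32-32ik_{*}\,,\qquad e^{ik_{*}Y(t)}-1=2ik_{*}\,t+\bigl(-2k_{*}^2+\tfrac{ik_{*}}{2}Y''(0)\bigr)t^2+O(t^3)\,.
\]
The real part of the $t^2$-coefficient of the product $\phi(t;k_{*})\bigl(e^{ik_{*}Y(t)}-1\bigr)$ then reads
\[
\mathrm{Re}\bigl[16\cdot(-2k_{*}^2)\bigr]+\mathrm{Re}\bigl[(32-32ik_{*})\cdot 2ik_{*}\bigr]=-32k_{*}^2+64k_{*}^2=32k_{*}^2\,,
\]
the contribution involving $Y''(0)$ being purely imaginary and hence dropping out.

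\emph{Step 3 (conclusion).} Since $\eta$ is symmetric, the $t$- and $t^3$-terms integrate to zero, so
\[
\mathrm{Re}\,M(k_{*})=32\,k_{*}^2\,\sigma^2+O(\varepsilon^2\sigma^2)\,,\qquad \sigma^2:=\int_{-\varepsilon}^{\varepsilon}t^2\,\eta(t)\,dt>0\,,
\]
the positivity of $\sigma^2$ holding provided $\eta$ is not a pure Dirac mass at $0$. Choosing $\varepsilon$ small enough so that the remainder is dominated, one concludes $\mathrm{Re}\,M(2\pi/\ln2)>0$.

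The main obstacle is the algebraic cancellation in Step 2: the negative $-32k_{*}^2$ from the direct second derivative has to be outweighed by the $+64k_{*}^2$ coming from the cross term $\phi'(0)\cdot(e^{ik_{*}Y(t)}-1)'(0)$, and the potentially troublesome term involving $Y''(0)$ has to turn out purely imaginary, so that the real part of the $t^2$-coefficient is the clean positive quantity $32k_{*}^2$. The remaining ingredients---the two changes of variable, the integration by parts, and the uniform remainder estimate---are routine, with the only mild subtlety being that the integration by parts for a general Radon measure $\eta$ is justified by approximation with absolutely continuous measures.
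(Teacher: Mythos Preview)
Your proof is correct and follows essentially the same strategy as the paper: both reduce $M(k_*)$ to a one-dimensional integral $\int\eta(t)\,[\text{explicit function of }t]\,dt$ and then Taylor-expand the integrand around $t=0$, arriving at the same leading real coefficient $32k_*^2$. The only difference is in how the reduction is carried out---the paper writes $\eta$ as a superposition of Diracs and collapses the double integral directly, whereas you substitute $\xi=Y-Z$, pass to $s=\tfrac12\tanh(\xi/2)$, and integrate by parts in $Y$---but the resulting one-dimensional integrand is literally the same: your $\phi(t;k)\bigl(e^{ikY(t)}-1\bigr)$ equals the paper's $-W(k,t)$ once one inserts $Y(t)=\ln(t+\tfrac12)$ and $e^{\xi(t)}=\tfrac{1+2t}{1-2t}$.
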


\begin{proof}
We can write $\eta $ as $\eta (x) =\int_{-\frac{1}{2}}^{\frac{1}{2}}\eta (s)\delta (x-s) ds$.
Using (\ref{E1}) and (\ref{E4}) we obtain
\begin{equation}
M(k) =-\int_{-\frac{1}{2}}^{\frac{1}{2}}\eta(s)
W(k,s) ds\,,   \label{E6}
\end{equation}%
where
\begin{equation}
W(k,s) =ik\int_{-\infty }^{0}dY\int_{\ln ( 1-e^{Y})
}^{\infty }dZ\Big[ ( e^{Y-Z}+1) \delta \Big( \frac{e^{Y-Z}}{%
e^{Y-Z}+1}-\frac{1}{2}-s\Big) ( e^{ikY}+e^{ikZ}) \Big]\,.
\label{E5}
\end{equation}
The identity
$\big( e^{\xi }+1\big) \delta \Big( \frac{e^{\xi }}{e^{\xi }+1}-\frac{1}{2%
}-s\Big) =\frac{\big( e^{\xi }+1\big) ^{3}}{e^{\xi }}\delta \Big( \xi
-\ln \Big( \frac{1+2s}{1-2s}\Big) \Big)$ implies
\[
W(k,s) 
 =\frac{8ik}{( 1-2s)^{2}( 1+2s) }\int_{-\infty
}^{0}dY\int_{\ln ( 1-e^{Y}) }^{\infty }dZ
\Big[ \delta \Big(
Y{-}Z{-}\ln \Big( \frac{1+2s}{1-2s}\Big) \Big) (
e^{ikY}+e^{ikZ}) \Big]\,.
\]
With $\theta =\ln \big( \frac{1+2s}{1-2s}\big)$  the only
values of $Y$ contributing to the integral defining $W(k,s)$
are those with $Y\in [ -\ln ( 1+e^{-\theta }) ,0] $
and $Z=Y-\theta $ whence
\begin{align*}
W(k,s) & =\frac{8ik}{(1-2s)^{2}( 1+2s) }%
\int_{-\ln ( 1+e^{-\theta }) }^{0}( 1+e^{-ik\theta })
e^{ikY}dY \\
& =\frac{8}{(1-2s)^{2}( 1+2s) }\big( 1+e^{-ik\theta
}-( 1+e^{-\theta }) ^{-ik}-( 1+e^{\theta })^{-ik}\big)\\
&=\frac{8}{( 1-2s)^{2} (1+2s) }%
\Big( 1+\Big( \frac{1-2s}{1+2s}\Big) ^{ik}-\Big( \frac{1+2s}{2}\Big)
^{ik}-\Big( \frac{1-2s}{2}\Big) ^{ik}\Big)\,.
\end{align*}

We take $k=\frac{2\pi }{\ln 2 }.$ Then, combining the factors 
$\ln  2 $ and using the periodicity of $\cos$, we find
\begin{align*}
 \mbox{Re}\, W\big( k_{1},\tfrac{1}{2}+s\big)  
& =\frac{8}{( 1-2s) ^{2}(1+2s) }\Big( 1+\cos \Big(
k_{1}\ln \Big( \frac{1-2s}{1+2s}\Big) \Big) \\
&\qquad \quad -\cos ( k_{1}\ln
( 1+2s)) -\cos ( k_{1}\ln ( 1-2s) \Big)\Big)\,.
\end{align*}

Expanding the function above using Taylor's series we 
find 
$\mbox{Re}\left( W\left( k_{1},\frac{1}{2}+s\right) \right) =-32\left(
k_{1}s\right) ^{2}+O\left( s^{3}\right)$ as $s\rightarrow 0$, 
and thus  the result follows from \eqref{E6} if $\varepsilon $ is sufficiently
small.
\end{proof}

\paragraph{PDE approximation and convective instabilities}
It is possible to give an interpretation of this instability of the constant
by approximating the coagulation
equation \eqref{uequation} by a PDE.
 Equation \eqref{uequation} with $K$ as in \eqref{E4}  can be rewritten as
\begin{align*}
\partial _{t} u(X)& +u(X)\int_{-\infty }^{\infty }\Big( \frac{e^{Z}+1}{e^{Z}}\Big) \eta \Big( 
\frac{1}{1+e^{Z}}-\frac{1}{2}\Big) u( X+Z) dZ \\
&-\int_{-\infty }^{0}\frac{1}{ (1-e^{Y})^{2}}\eta \Big( e^{Y}-%
\frac{1}{2}\Big) u( X+Y) u( X+\ln ( 1-e^{Y})) dY =0\,.
\end{align*}

We now use the change of variables $\frac{1}{1+e^{Z}}-\frac{1}{2}=\frac{\xi 
}{2}$ in the first integral and $e^{Y}-\frac{1}{2}=\frac{\xi }{2}$ in the
second one. Moreover, we will assume, in order to simplify the numerical
constants, that $\eta (s) =\frac{1}{4\varepsilon }\zeta \left( \frac{2s}{%
\varepsilon }\right)$, $\int \zeta (s) ds=1$ and $ \zeta
(s) =\zeta (-s)$.
Then
\begin{align}
\partial _{t}u(X)& +u(X)\int_{-1}^{1}u\Big( X+\ln\Big ( \frac{1-\xi }{1+\xi }\Big)\Big) 
\frac{\frac{1}{\varepsilon }\zeta \big( \frac{\xi }{\varepsilon }\big)
d\xi }{( 1+\xi )( 1-\xi ) ^{2}}  \nonumber \\
&-\int_{-1}^{1}u( X-\ln  2 +\ln ( 1+\xi )) u( X-\ln 2 +\ln ( 1-\xi)) 
\frac{\frac{1}{\varepsilon }\zeta \big( \frac{\xi }{\varepsilon }\big)
d\xi }{( 1+\xi) ( 1-\xi ) ^{2}}  =0 \,. \label{E8}
\end{align}

We can expand the terms with logarithm in the arguments, using Taylor up to second order. Then  (\ref{E8}) can be
approximated by
\begin{align}
0=&\partial _{t}u(X) +\Lambda _{0}\Big[ u(X)^{2}-u( X-\ln 2)^{2}\Big]  \nonumber \\
&+\Lambda _{2}\varepsilon ^{2}\big[ -2u(X) \partial_{X}u(X) +2u(X) \partial _{X}^{2}u(X)
-W(X) \big]   \label{E9}
\end{align}%
with
\[W( X) =-u( X{-}\ln  2 ) \partial
_{X}u( X{-}\ln  2 ) +u( X{-}\ln 2) \partial _{X}^{2}u( X{-}\ln 2)-\big( \partial _{X}u( X{-}\ln 2) \big) ^{2}
\]and 
\begin{equation*}
\Lambda _{0}=\int_{-1}^{1}\eta \frac{\zeta(\eta) d\eta }{%
( 1+\varepsilon \eta) ( 1-\varepsilon \eta )^{2}}\,,\qquad  \Lambda _{2}=\int_{-1}^{1}\eta ^{2}\zeta (\eta) d\eta\,.
\end{equation*}

Notice that $\Lambda _{0}\rightarrow 1$ as $\varepsilon \rightarrow 0.$ The
approximation (\ref{E9}) is valid as long as the characteristic lengths
associated to the function $u$ are smaller than $\varepsilon .$ This
condition holds, for instance, if $\vert \partial _{X}u\vert \ll 
\frac{u}{\varepsilon }$ and similar conditions for higher order
derivatives hold. Equation (\ref{E9}) suggests that the coagulation equation
is ill-posed and has the same type of instabilities as backward parabolic equations. However, this is not really so, because the
approximation (\ref{E9}) is only valid if the wave numbers $k$ are smaller
than $\frac{1}{\varepsilon }.$ Nevertheless the type of instabilities
exhibited by backward parabolic equations explain the instabilities
 in Proposition \ref{InstNearDiagonal}. Indeed, writing $%
u(X) =1+h(X) $ and linearizing (\ref{E9}) around $%
u=1, $ we obtain
\begin{align*}
\partial _{t}h(X)& +2\Lambda _{0}\big[ h(X) -h( X-\ln 2) \big]  \\
&+\Lambda _{2}\varepsilon ^{2}\big[ -2\partial _{X}h( X)
+2\partial _{X}^{2}h(X) +\partial _{X}h( X-\ln 2) -\partial _{X}^{2}h( X-\ln 2) %
\big] =0\,.
\end{align*}

Then, the Fourier transform of $h,$ denoted as $H(k)$, satisfies
(\ref{E0}) with $M(k)$ approximated as
\begin{equation}
M( k) =-2\Lambda _{0}( 1-e^{-ik\ln 2 })
-\Lambda _{2}\varepsilon ^{2}\big[ -2ik-2k^{2}+ike^{-ik\ln  2
}+k^{2}e^{-ik\ln 2}\big]    \label{F1}
\end{equation}%
if $k\ll \frac{1}{\varepsilon }$ and thus
\begin{equation*}
\mbox{Re}( M(k)) =-2\Lambda _{0}( 1-\cos (k\ln 2 ) ) +\Lambda _{2}\varepsilon ^{2}\big[
2k^{2}+k\sin ( k\ln 2) -k^{2}\cos( k\ln  2) \big]
\end{equation*}%
if $k\ll \frac{1}{\varepsilon }$. The terms associated to the
diagonal kernel yield the periodic contribution $-2\Lambda _{0}( 1-\cos( k\ln 2)) $ which gives stable behaviour,
although neutrally stable for $k=k_{n}=\frac{2\pi n}{\ln  2 }$
with $n\in \mathbb{Z}$. The leading contribution among the
terms of order $\varepsilon $ for large $k$ is the term $2\Lambda
_{2}\varepsilon ^{2}k^{2}$ which is due to the backward parabolic term $%
2\Lambda _{2}\varepsilon ^{2}\partial _{X}^{2}h(X)$. The
instability induced by this term at the values $k=k_{n}$ explain the
instabilities obtained above. This term becomes of order one if $k$ is of
order $\frac{1}{\varepsilon }.$

The linear instability of the homogeneous positive solutions described above
is analogous to many instabilities arising in problems of pattern formation
(see e.g. \cite{Turing} or page 16 in  \cite{CrossHohenberg}), but it does not take place for the
standard viscous regularization of the Burgers equation.

If $\bar{k}\in \R$ is one of the values for which $\mbox{Re}( M_{\varepsilon }(k)) >0$ we obtain disturbances
of homogeneous solutions of the form
\begin{equation}
\exp \big( \mbox{Re}( M( \bar{k})) t\big) \exp
( i( \bar{k}X+\mbox{Im}( M( \bar{k}))t) )\,.  \label{A1}
\end{equation}
Notice that the approximation (\ref{F1}) allows to approximate $\mbox{Im}( M(k)) $ if $k$ is of order one since 
$\mbox{Im}( M(k)) =-2\Lambda _{0}\sin ( k\ln 2 )$
as $\varepsilon \rightarrow 0.$ Therefore (\ref{A1}) can be interpreted as a
disturbance with wave number $\bar{k}$ propagating towards increasing values
of $X$ with velocity $\frac{\vert \mbox{Im}(M(\bar{k}))\vert }{\bar{k}}.$ 
Notice that
due to the presence of convective terms a small disturbance that is initially localized at 
$X=X_{0}$ with unstable wave numbers can become of
order one, due to its exponential growth, at values $X=X_{1}$ with $%
X_{1}-X_{0}\gg 1.$ This phenomenon, known as convective instability takes
place in many other situations, such as spiral waves in excitable media \cite{SandScheel} or plasma physics (see e.g. Chapter 62 of \cite{LL}).

%
\subsubsection{Stability of the constant solutions for $K_{\alpha}$.}\label{Ss.stabilityalpha}
%
Interestingly, for kernels that are not close to the diagonal one, the constant solution is stable.
Here we consider  $K_{\alpha}(x,y) $ as in \eqref{kernelfamily} for different values of $\alpha$ with the normalization
\begin{equation}\label{calphadef}
 c_{\alpha}= \frac{1}{B(\alpha,\alpha{-}1)\big(\psi(2\alpha{-}1)-\psi(\alpha)\big)}\,, \qquad \mbox{ where } \psi(z)=\frac{\Gamma'(z)}{\Gamma(z)}\,, 
\quad B(\alpha,\alpha{-}1)=\frac{\Gamma(\alpha)\Gamma(\alpha{-}1)}{\Gamma(2\alpha{-}1)}\,.
\end{equation}
This choice is such that the constant $A$ in \eqref{finiteintegral} satisfies $A=1$.

We will denote as $M_{\alpha }(k) $
the function $M(k) $ defined in \eqref{E1} for the kernels $K_{\alpha}(x,y)$. Some elementary, but tedious computations yield
\begin{align}\label{B4}
{M}_{\alpha }(k)  &=-\frac{\Gamma ( 2\alpha {-}1)}{\Gamma (\alpha) ( \psi ( 2\alpha {-}1) -\psi
( \alpha ) ) }\left[ \frac{\Gamma (\alpha) }{\Gamma ( 2\alpha {-}1) }-\frac{\Gamma ( \alpha +ik) }{%
\Gamma ( 2\alpha {+}ik{-}1) }\right]  \\
&\qquad -\frac{\Gamma ( 2\alpha {-}1) \Gamma ( \alpha {+}ik{-}1) }{\Gamma (\alpha) \Gamma ( \alpha {-}1) 
( \psi ( 2\alpha {-}1) -\psi( \alpha ) ) }\Big[ \frac{%
\Gamma ( \alpha {-}ik) }{\Gamma ( 2\alpha {-}1) }-\frac{%
\Gamma ( \alpha) }{\Gamma ( 2\alpha {+}ik{-}1) }\Big] \nonumber
\end{align}

We have plotted the function $k\rightarrow \mbox{Re}\left( M_{\alpha }(k) \right) $ for $k\in \mathbb{R}$ for different values of $\alpha$
in Figure \ref{fig:bifurc2}.

 \begin{figure}[h!]
  \centering
  \includegraphics[width=.95\textwidth]{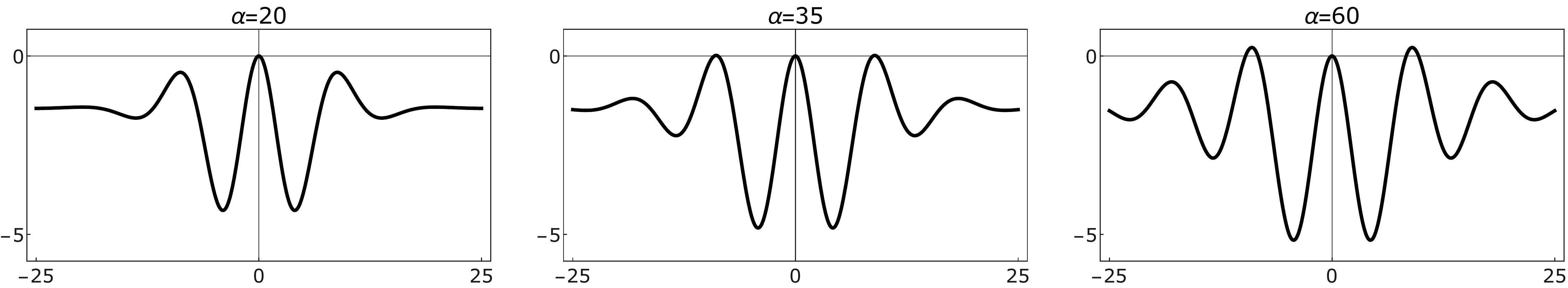}
 \caption{
Real part of the function ${M}_\alpha$ from \eqref{B4} for three values of $\alpha$. For $\alpha=\alpha_{\mathrm{crit}}\approx35$ we observe a change in the stability of the constant solution, see \eqref{C2}.
 } %
  \label{fig:bifurc2}
\end{figure}

Notice that for large $\alpha $  
we are  in an analogous situation  to the one discussed in
 Section \ref{Ss.stabilityneardiagonal} and we can expect 
to find values $k\in \mathbb{R}$ with $\mbox{Re}(M_{\alpha}(k)) >0$. This can be seen in Figure \ref{fig:bifurc2}.
On the other hand, we can also see  that $\mbox{Re}(M_{\alpha }(k)) \leq 0$ for smaller values of $\alpha$.
Moreover $\mbox{Re}( M_{\alpha }(k)) =0$ only for $%
k=0.$ Therefore, the constant solutions are stable for this range of values
of $\alpha .$ The computations of $\mbox{Re}(M_{\alpha }(k)) $ indicate that the critical value of $\alpha $ for which
the change of stability takes place is
$\alpha _{\mathrm{crit}}=35$.

     In Figure \ref{fig:const-stab}  we see the results of numerical simulations of the coagulation equation with initial data that
     are perturbations of the constant solutions. They confirm that for  $\alpha<\alpha_{\mathrm{crit}}$ the perturbations do not grow, while for
     $\alpha>\alpha_{\mathrm{crit}}$ the perturbation becomes
     oscillatory and grows. 

 \begin{figure}[h!]
  \centering
  \includegraphics[width=.98\textwidth]{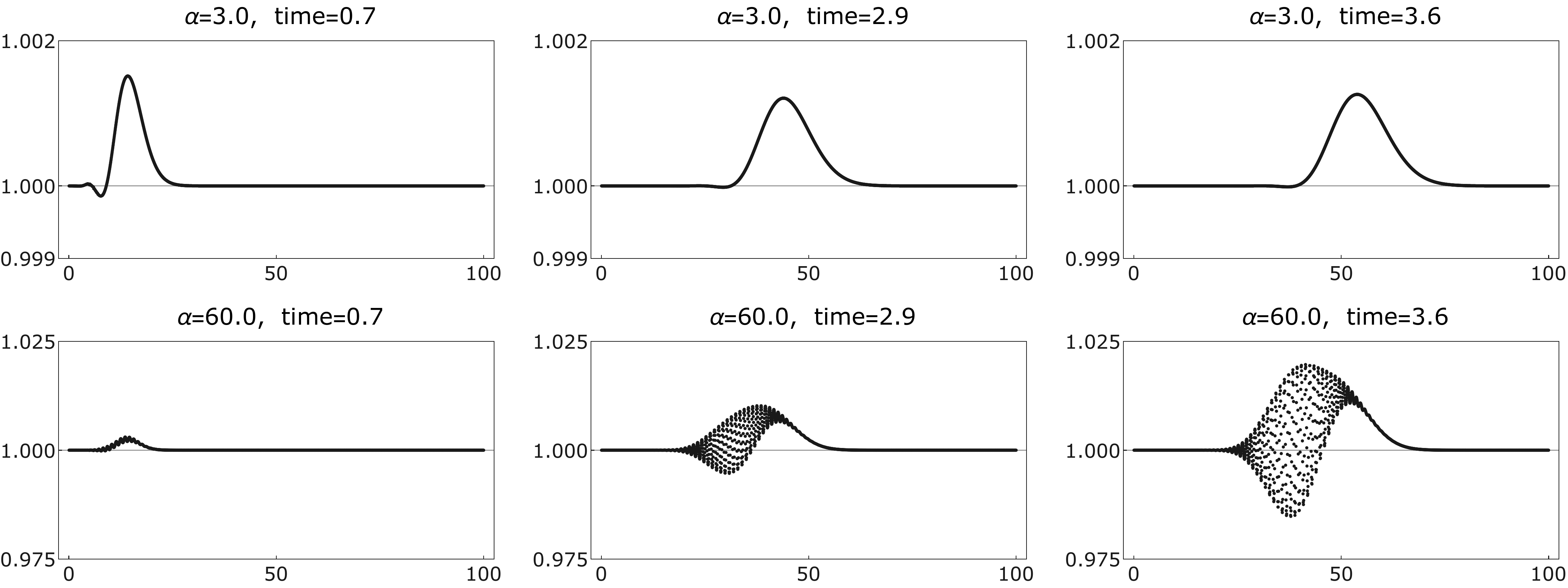}
\caption{Numerical simulations of the coagulation equation with initial data that are perturbations of a constant for $\alpha=3.0$ (top) and
$\alpha=60$ (bottom).The numerical scheme as well as the choice of the space and time units are described in the appendix.}
  \label{fig:const-stab}
\end{figure}

 \subsubsection{Traveling wave solutions.}
\label{Ss.traveling}

 In this section we compare the traveling wave solutions which solve \eqref{Gequation} with the kernels $K_{\alpha }(x,y) $ 
 in \eqref{kernelfamily} for different values $\alpha$.
As mentioned before, we can set without loss of generality $b=1$. Then, since $A=1$ we have 
 $G(-\infty) =1$.  Numerical
computations of these traveling waves for different values of $\alpha $ can
be seen in Figure \ref{fig:twaves}.

   \begin{figure}[h!]
  \centering
  \includegraphics[width=.94\textwidth]{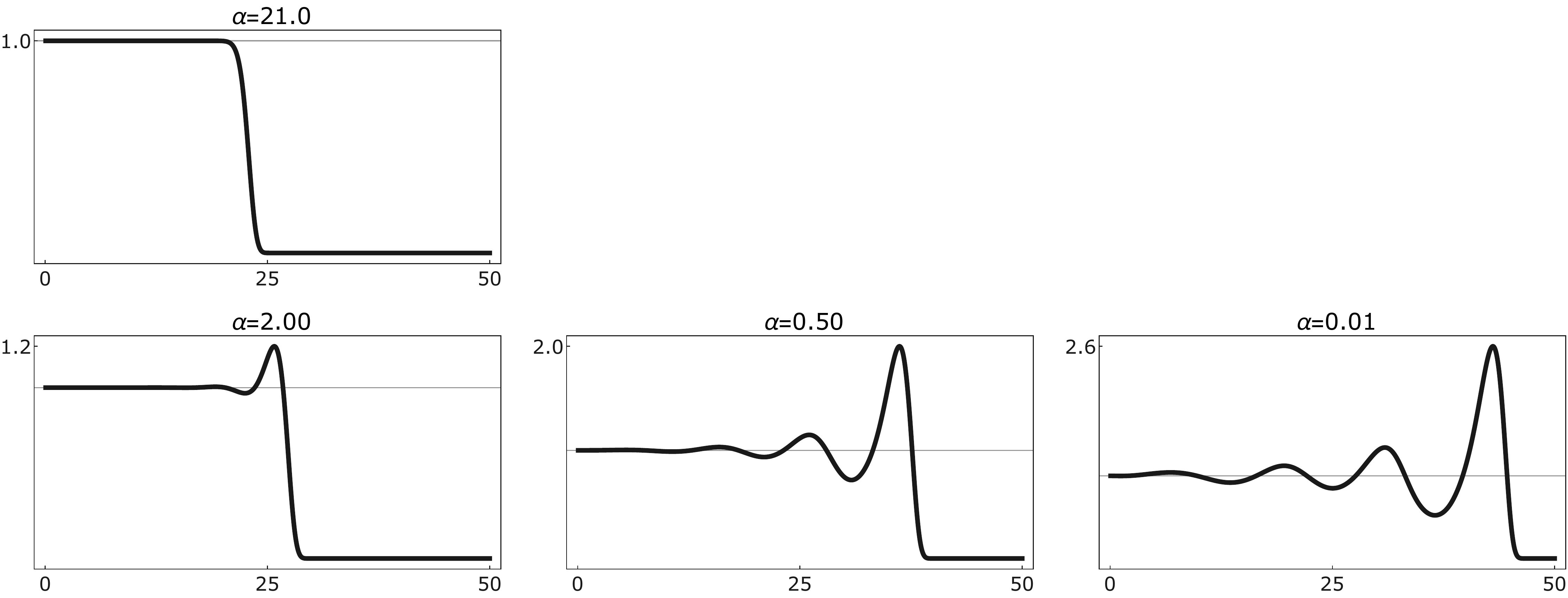}
  \caption{The shape of the traveling wave for different values of $\alpha$ ($u$ against $X$).}
  \label{fig:twaves}
\end{figure}

These pictures show
that the waves separate from the value $G=1$ at $X=-\infty $ in an
oscillatory manner. In order to understand this fact we consider the
linearization of \eqref{Gequation} near $G(-{\infty })=1$ and write $%
G=1+H(X)$. 
Then we obtain
the following linearized problem
\begin{equation}
H(X) =\int_{-\infty }^{0}dY\int_{\ln \left( 1-e^{Y}\right)
}^{\infty }dZK\left( e^{Y-Z},1\right) \left[ H( Y+X) +H(Z+X) \right] \,. \label{LinTW}
\end{equation}
If we look for solutions of \eqref{LinTW} of the form $H(X)=e^{i\eta X}$ with $\eta \in \mathbb{C}$
we find 
$-i\eta =M( \eta )$
with $M(\cdot)$ as in \eqref{E1}.

In order to obtain solutions of \eqref{LinTW} which tend to zero as $%
X\rightarrow -\infty $ we need to obtain solutions of $-i\eta =M( \eta )$
such that $\mbox{Im}(\eta) <0.$ Oscillatory behaviours  arise if the corresponding solution  satisfies $%
\mbox{Re}(\eta ) \neq 0.$ Therefore, we might expect to have
solutions of \eqref{Gequation} oscillating as $X\rightarrow -\infty $ if
the roots of $-i\eta=M(\eta)$ with the largest value of $\mbox{Im}(\eta ) $ in the half-plane $\left\{ \eta :\mbox{Im}(\eta)
<0\right\} $ satisfy $\mbox{Re}( \eta) \neq 0.$
Thus, if $M_{\alpha}$ denotes the function $M(\cdot)$ for the kernels $K_{\alpha}$ in \eqref{kernelfamily} with $c_{\alpha}$ as in \eqref{calphadef}
we need to investigate the roots of 
\begin{equation}
{M}_{\alpha }( k) +ik=0  \label{B5}
\end{equation}
in the half plane 
 $\{ \mbox{Im}( k) <0\}$. These roots are
plotted in Figure \ref{fig:bifurc1}. We can
see  that, for $\alpha <\alpha _{\ast },$ with $\alpha_{\mathrm{crit}}\approx 20,$ the roots of \eqref{B5} in the half-plane $\left\{ \mbox{%
Im}(k) <0\right\} $ with largest value of $\mbox{Im}(k) $ have $\mbox{Re}(k) \neq 0$, while
for  $\alpha>\alpha_{\ast}$   a unique root 
with $\mbox{Re}(k) =0$. 

 \begin{figure}[h!]
  \centering
  \includegraphics[width=.9\textwidth]{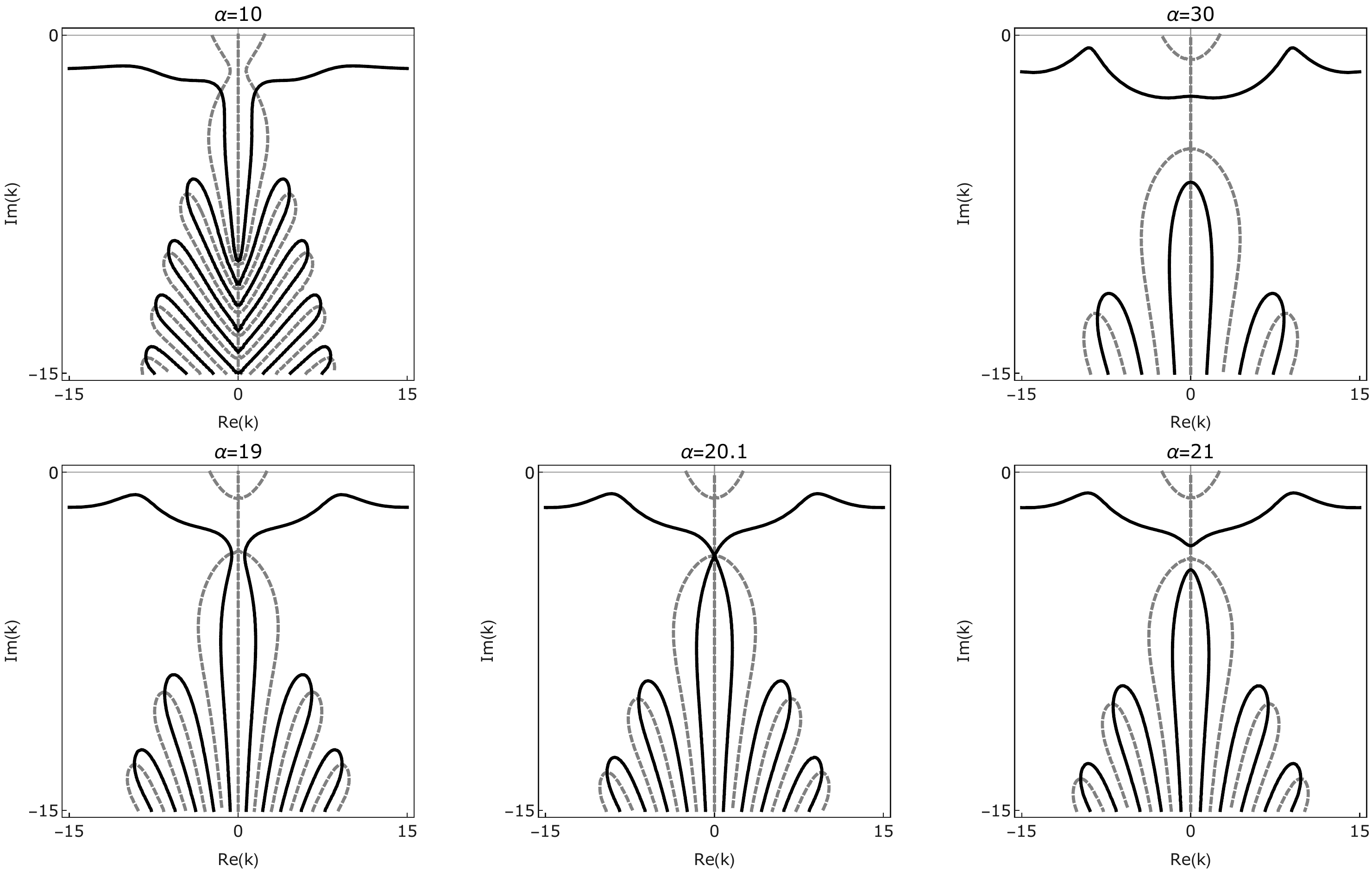}
 \caption{Zeros of 
$\mathrm{Re}({M}_\alpha(k)+i k)$ (black, solid)
 and $\mathrm{Im}({M}_\alpha(k)+ik)$ (gray,dashed) in the complex half plane $\mathrm{Im}(k)<0$ and for several values of
 $\alpha$, see \eqref{B5}. The traveling wave solution for the $\alpha$-kernel \eqref{kernelfamily} is oscillatory and monotone for 
$\alpha<\alpha_{\ast}$ and $\alpha>\alpha_{\ast}$, respectively, where $\alpha_{\ast} \approx 20.1$.
} %
  \label{fig:bifurc1}
\end{figure}

Therefore
 we  expect that $G(X) \rightarrow 1$ as $X\rightarrow -\infty ,$ with
oscillations of decreasing  amplitude as $X\rightarrow -\infty $ if $%
\alpha <\alpha _{\ast }$, while $G$ is monotone if  $\alpha >\alpha _{\ast }$.
 This scenario is confirmed by direct numerical
simulations of the solutions of \eqref{uequation} (cf. Figure \ref{fig:twaves}).

Interestingly, since the stability of the constant solution is equivalent to 
\begin{equation}
\max_{k\in \mathbb{R}}\big( \mbox{Re}( {M}_{\alpha }(k) ) \big) \leq 0\,,  \label{C2}
\end{equation}%
we see that both, stability of the constant and monotonicity of traveling waves, depend on the same analytic function $M_{\alpha}(\cdot)$. 
These conditions are obviously not equivalent. 
If $\alpha <\alpha_{\ast}$ the constant solution is stable, but the traveling wave is oscillatory, while,
 a bit paradoxically,  for $\alpha >\alpha _{\mathrm{crit}}$ the constant solution is unstable for the traveling wave is monotone for $X \to-\infty$.
Only if $\alpha \in(\alpha _{\ast },\alpha _{\mathrm{crit}}) $  the constant solution is stable and the traveling is monotone.

Due to the convective instabilities discussed in Section \ref{Ss.stabilityneardiagonal}
structures such as 
traveling waves might only be stable under  perturbations that are sufficiently small
as $X\rightarrow -\infty $ such that they  do not have time to increase before they arrive
at the front of the wave.
On the other hand the dissipative effects at
the front where the values of $u$ decay in lengths of order one, might have
stabilizing effects. Therefore, the stability of the
fronts for the diagonal kernel suggests that the fronts should be stable
also for near-diagonal kernels under perturbations which are still small
when they arrive to the back of the front.

Figure \ref{fig:tw-stab} shows the result of numerical simulations which indicate the stability of the traveling wave for small $\alpha$, but shows that the traveling wave
is unstable for $\alpha>\alpha_{\mathrm{crit}}$. 
\begin{figure}[h!]
  \centering
  \includegraphics[width=.95\textwidth]{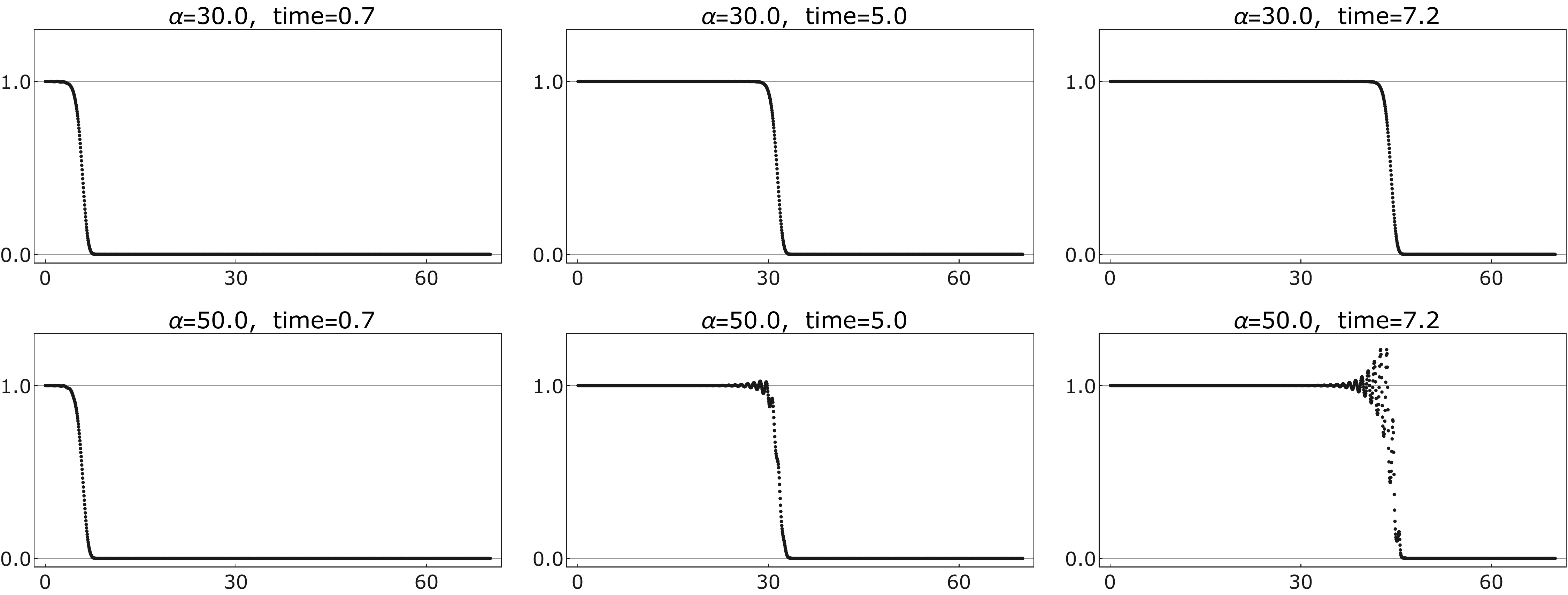}
\caption{Stability versus instability of the traveling wave for small and large $\alpha$ respectively.}
  \label{fig:tw-stab}
\end{figure}

In the range $\alpha \in ( 0,\alpha _{\ast }) $ there are no
 convective instabilities, but the traveling waves connecting the values of $G(-\infty) >0$ with $G( \infty ) =0$ exhibit strong
oscillations and peaks. The numerical simulations of \eqref{uequation}
suggest that these waves are nevertheless stable.
This is an intriguing feature which deserves a more careful
understanding. 
In a forthcoming paper \cite{NV16} we will  construct via formal matched asymptotic expansions a traveling wave solution for kernels 
that are similar to $K_{\alpha}$ for small $\alpha$. This analysis  also  yields precise expressions for the size and width of the peaks.

\ignore{
We are going to construct  a solution to \eqref{Gequation} with $b=1$ such that $\lim_{X \to -\infty} G(X)=A^{-1}$ with $A$ as in \eqref{finiteintegral}.
For the kernel \eqref{kernel} one can compute that $G(-{\infty}) \approx \frac{\eps^2}{4}$ as $\eps \to 0$. We then make the ansatz
$G(X)=G(-{\infty}) + C e^{\mu X}$ as $X \to \infty$, plug this  into the equation and obtain after linearization an eigenvalue equation for $\mu$. It turns out that this equation
has three solutions with positive real part. Two complex conjugates with $\mu_{\pm} =\mu_1\pm \mu_2$ and $ \mu_1 \approx \frac{\eps^2}{8}, \mu_2\approx \frac{\eps}{2}$
 as $\eps \to 0$  and a third real solution $\mu_3$ of size of order one.
 
 We are then going to construct a global nonnegative solution to \eqref{Gequation} via a shooting argument in the two-dimensional manifold given by the eigenvalues. (The manifold
 is only two-dimensional because due to the translation invariance of \eqref{Gequation} we can get rid of one parameter). The key arguments are the following: 
 one has  to show that the coefficient 
 in front of  $e^{\mu_3 X}$ must be small since otherwise the solution becomes negative. Second, as along as $G \ll 1$ we can approximate \eqref{Gequation} 
by a nonlinear ODE system that is a perturbation of a Lotka-Volterra system.
We use an adiabatic approximation to compute the increase of an associated energy $E$ along the trajectories. This approximation is valid as along as $E \ll \frac{1}{\eps}$.
 When $E \sim \frac{1}{\eps}$ we enter what we call the intermediate regime. In this regime $G$ develops peaks of order one that can be approximately described
by self-similar solutions to the coagulation equation with additive kernel. These peaks are connected by wide regions in which $G\sim e^{-\frac{1}{\eps}}$, that is
$G$ is extremely small.

Notice that in our numerical simulations we consider the rescaled situation in which the traveling wave connects one with zero. If we rescale the case described above accordingly
this means, that the peaks
of $G$ are of order $\frac{1}{\eps^2}$, that is they become very large in the limit of small $\eps$. This is  also seen in the simulations for small $\alpha$.
}

The origin of these oscillations is not clear to us. We have however observed them in other examples, in the  construction of self-similar solutions
to the coagulation equation with kernel $K(\xi,\eta)=(\xi\eta)^{\lambda}$ with $\lambda \in (0,1/2)$ \cite{MNV11}. 
Also in this case solutions develop oscillations that become more extreme the smaller $\lambda$ is. 
Oscillatory traveling waves of similar shape  and their stability properties have also been studied in \cite{PeSmWe93}, for a generalized KdV-Burgers equation,
which contains
diffusive and dispersive effects. The effect of the diffusive
effects is strong enough to allow the existence of stable traveling waves,
but the dispersive effects are relevant yielding highly oscillatory
traveling waves. It is natural to ask if the oscillatory behaviour of the traveling waves 
in the coagulation equation can be explained by  a similar competition between
diffusive and dispersive effects.

  \subsubsection{Long-time behaviour of solutions for integrable data.}
  \label{Ss.nwaves}
  
  \begin{figure}[ht!]
  \centering
  \includegraphics[width=.95\textwidth]{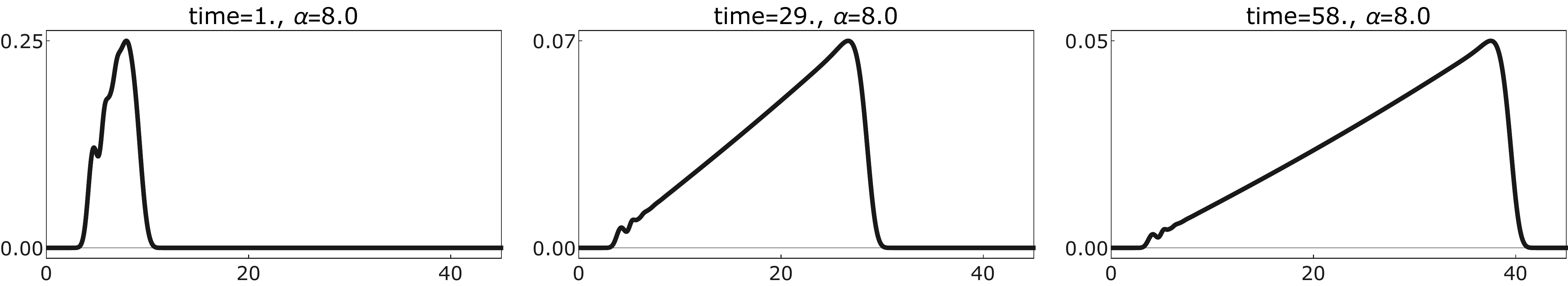}\\
  \includegraphics[width=.95\textwidth]{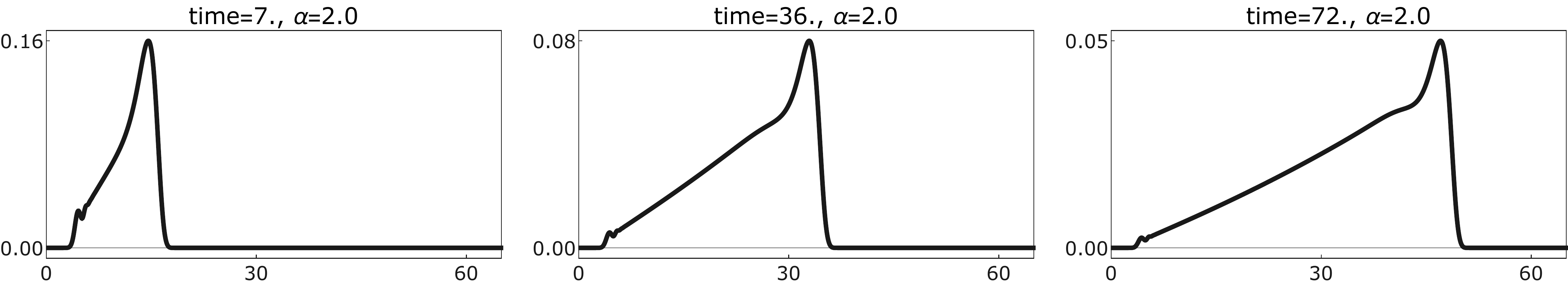}\\
  \includegraphics[width=.95\textwidth]{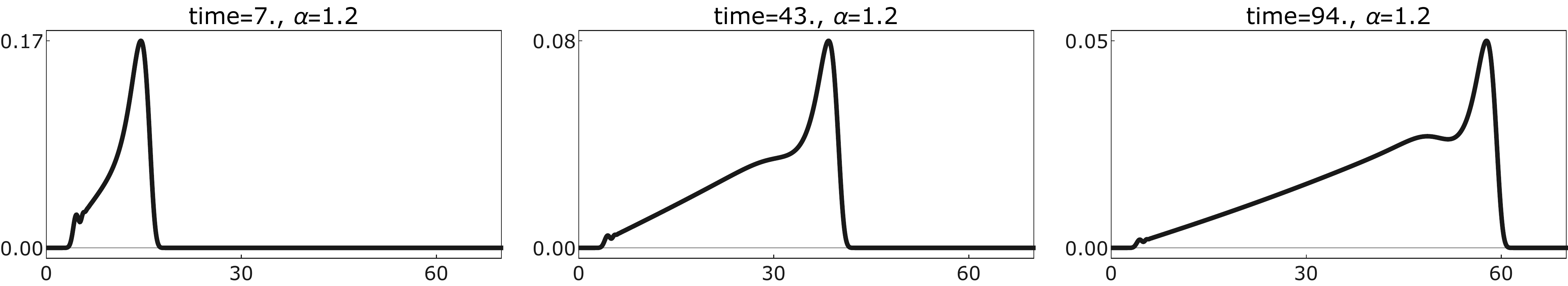}\\
  \includegraphics[width=.95\textwidth]{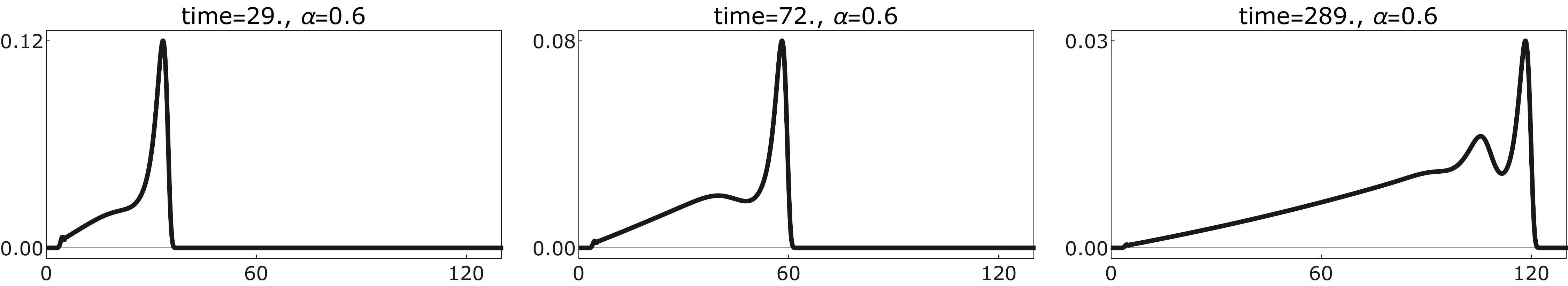}
  \caption{Convergence to the N-wave for $\alpha=8$ (top),  $\alpha=2$ (second row),
  $\alpha=1.2$ (third row) and $\alpha=0.6$ (bottom).}
  \label{fig:Nwaves}
\end{figure}

 In the case of solutions of \eqref{uequation} with integrable initial
data, the analogies  with the classical Burgers
equation suggest that  solutions of \eqref{uequation} should behave
asymptotically as $N$-waves. Van Dongen and Ernst \cite{vanDoErnst88} had already predicted the correct time scale on which a non-trivial limit should appear and also predicted that
the transition profile is given as a solution of \eqref{Gequation}, but the explicit connection to the Burgers equation and $N$-waves has not been made.

In numerical simulations for $K_{\alpha}$ with $\alpha<\alpha_{\mathrm{crit}}$  (see Figure \ref{fig:Nwaves}) we indeed observe convergence to an $N$-wave.
We also see, however, 
that the approach to the $N$-wave is quite unusual, at least for small $\alpha$. In this case, as
 discussed in Section \ref{Ss.stabilityalpha} the
traveling waves connecting the values at the back and rear of the shock
appearing in the $N$-wave exhibit strong oscillations if $\alpha $ is small
and have the property that the maximum values of $u$ along these waves are
much larger than the values of $u$ on the back of the wave. These strong
oscillations are visible in Figure \ref{fig:Nwaves}. 
We also conjecture that, as predicted in the case of diagonal kernel by \cite{BenNaimKrap12}, 
that the transition region scales as $\ln t$ as $ t \to \infty$ (see right panel in Figure \ref{fig:cartoon}).

In the case that $\alpha>\alpha_{\mathrm{crit}}$, the same considerations indicated above concerning the stability of the
fronts under convective instabilities can be raised about the stability of
the $N$-waves. In the region where the function $u$ is increasing along the $%
N$-wave we can assume that the amplitude is approximately constant and then
disturbances with wavelength of order one might propagate along the wave and
modify in a significant manner the shape of the wave before the disturbances
arrive to the region where $u$ decreases. Numerical simulations for smooth data with compact support for large $\alpha$
indicate that this is indeed the case (see Figure \ref{fig:nwave-stab}). The main issue here is to estimate
the amplitude of the disturbances associated to unstable Fourier wave numbers
in regions where $X$ is of order one. 
We also observe in Figure \ref{fig:nwave-stab} oscillations at the shock which are reminiscent of the oscillations for the diagonal kernel
(see Figure \ref{fig:uoscillations}) that are due to fibres with different mass. Thus, it will also be
relevant to understand the mass exchange between different fibres that takes place for kernels that are close to the diagonal
one.

 \begin{figure}[h!]
  \centering
  \includegraphics[width=.95\textwidth]{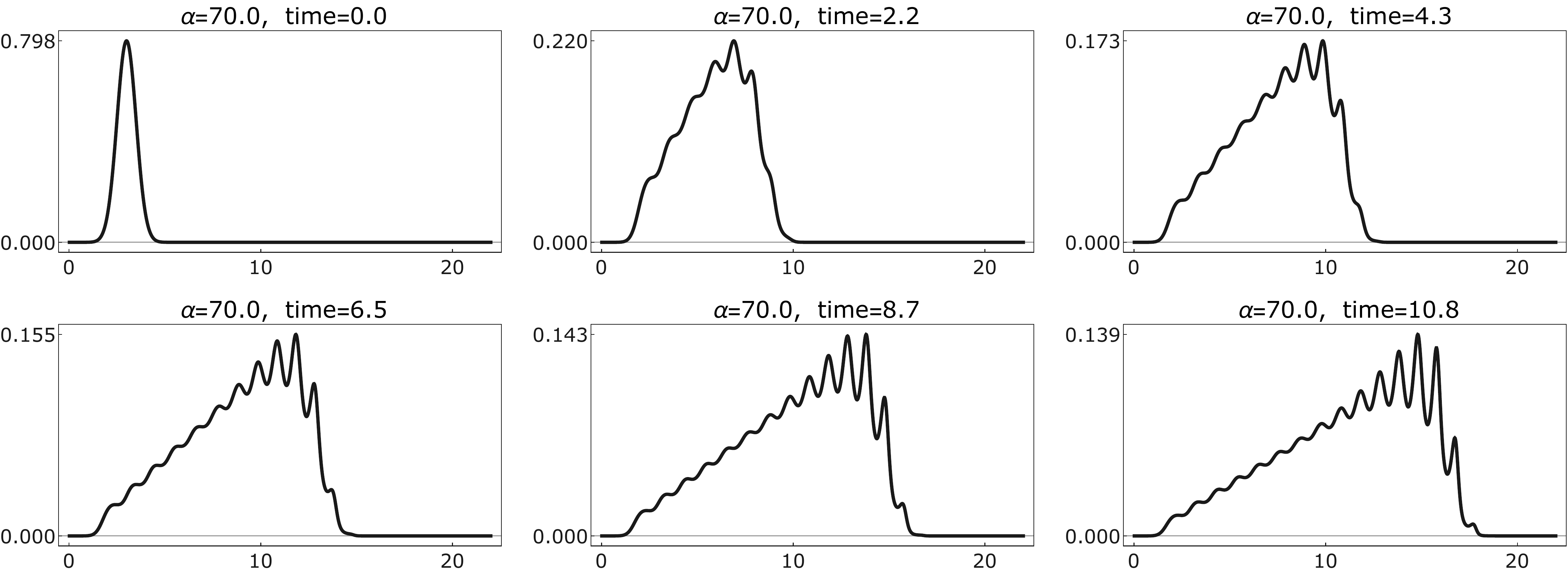}
  \caption{Evolution for smooth integrable initial data for  $\alpha=70$.}
  \label{fig:nwave-stab}
\end{figure}

%
\section{Summary and concluding remarks}
%
In this article we studied the long-time behaviour of solutions to the coagulation equations for kernels with homogeneity one.
For kernels with this homogeneity the long-time behaviour depends on whether the kernel is diagonally dominant (called class-I in the literature) or not.
In the latter case one expects that a family of self-similar solutions with finite mass exists and solutions converge to a member of this family
which has the same decay behaviour as the initial data. However, to  rigorously 
establish just the existence of such a family and in particular to determine their precise range seems in general a difficult task.

For class-I kernels, no self-similar solutions with finite mass can exist. In suitable variables (see \eqref{newvariables} and \eqref{uequation})
heuristics suggest that the long-time behaviour should be as in the classical Burgers equation, which means that for integrable
data solutions approximate an $N$-wave in the limit. We performed numerical simulations that 
confirm this conjecture, but they also reveal that the details of how the $N$-wave is approximated is quite unusual. In particular, for kernels that 
are not close to the diagonal one, we observe strong oscillations near the shock front. Those can be explained by the fact that the transition at the shock
is given by traveling wave profiles. A linear analysis indeed suggests that these traveling waves are oscillatory which is also observed in  numerical simulations 
for Riemann data.
It would be very interesting to rigorously prove the existence of such traveling waves and understand their regularity and stability properties.
In a first step this might be feasible for kernels close to the diagonal one. This case is also of particular interest due to the instability of the constant
solutions that we found in this regime and the results of numerical simulations that suggest that also traveling waves are unstable for such kernels.

\ignore{
The Cartoon \ref{fig:cartoon} summarizes  the expected long-time behaviour of solutions to the coagulation equation in the variables \eqref{newvariables}, \eqref{uequation} for the
two different classes of kernels in the case that the class-I kernel is not too  close to the diagonal one. 

\begin{figure}[h!]
  \centering
  \includegraphics[width=.4\textwidth]{case_2_asymp}
  \hskip1cm
  \includegraphics[width=.4\textwidth]{case_1_asymp}
\caption{Cartoon of large-time behaviour of solutions for the different type of kernels.}
  \label{fig:cartoon}
\end{figure}
 
}
%
\appendix
\section{Scheme for the numerical simulations}
%
%
The starting point for numerical simulations is the time-dependent problem \eqref{eq1} in an exponentially rescaled space variable
but as in Section \ref{Ss.diagonal} it is convenient to replace the scaling law \eqref{newvariables} by
\begin{align}
\label{app:Scaling}
\xi =2^X\,,\qquad X=\frac{\ln\xi}{\ln2}\,,\qquad T=t\ln2\,,\qquad \xi^2f(t,\xi)=u(t\ln{2},\ln\xi/\ln{2})\,.
\end{align}
 Moreover, it is also reasonable to normalize the kernel by
$\int_0^1K(x,1-x)\mathrm{d}x=1$, which implies $c_\alpha=\Gamma(2+2\alpha)/\Gamma(1+\alpha)^2$ for the family \eqref{kernelfamily}. 
\bigskip\par\noindent %
In the diagonal case, the nonlinear lattice equation \eqref{diagonal1} has a natural interpretation as a hierarchy of time-dependent ODEs, provided that the initial data are constant for $j< j_0$. In fact, $u_j(0)=c$ for all $j<j_0$ implies $u_j(t)=c$ for all $j<j_0$ and $t\geq0$, and by iteration we can hence regard \eqref{diagonal1} as an non-autonomous ODE for the output $u_j$ with 
known input $u_{j-1}$. This allows us to employ standard scalar ODE integrators, as for instance {\tt{DSolve}} in  {\sc Mathematica}, for the numerical solution of the discrete Burgers lattice 
\eqref{diagonal1}; cf. Figure \ref{fig:Nwavediagonal}.
\medskip
For non-diagonal kernels such as \eqref{kernelfamily}, the dynamical equation \eqref{eq1} can, thanks to  \eqref{uequation} and \eqref{app:Scaling}, be written as
\begin{align}
\label{app:Dynamics1}
\partial_T u(T,\cdot)=\mathcal{I}_{\mathrm{gain}}\big(u(T,\cdot)\big)-
\mathcal{I}_{\mathrm{loss}}\big(u(T,\cdot)\big)
\end{align}
with 
\begin{align}
\label{app:Integrals1}
\begin{split}
\mathcal{I}_{\mathrm{gain}}\big(u(T,\cdot)\big)|_X&:=\int_0^\infty W_{\mathrm{gain}}(Y)u\big(T,X-1-Y\big)u\big(T,X-1+\hat{Y}(Y)\big)\mathrm{d}{Y}\\
\mathcal{I}_{\mathrm{loss}}\big(u(T,\cdot)\big)|_X
&:=u(T,X)\int_{-\infty}^\infty
W_{\mathrm{loss}}(Y)u(T,X-Y)\mathrm{d}Y\,.
\end{split}
\end{align}
Here, the  weight functions are defined by
\begin{align}
\label{app:Weights}
W_{\mathrm{gain}}(Y):= \frac{K_\alpha\big(2^{Y+1}-1,1\big)}{\big(1-2^{-1-Y}\big)^2}\,,\qquad W_{\mathrm{loss}}(Y):= K_\alpha\big(2^{-Y},1\big)2^Y
\end{align}
and the function $\hat{Y}$ with
\begin{align}
\label{app:IntegralShift}
\hat{Y}(Y):=\frac{\ln(2-2^{-Y})}{\ln 2}
\end{align}
is strictly increasing for $Y>0$ and satisfies  $Y(0)=0$, $Y(+\infty)=1$, see Figure \ref{Fig.IntWeights1}.
\begin{figure}[ht!]
\centering{ %
\includegraphics[width=0.95\textwidth]{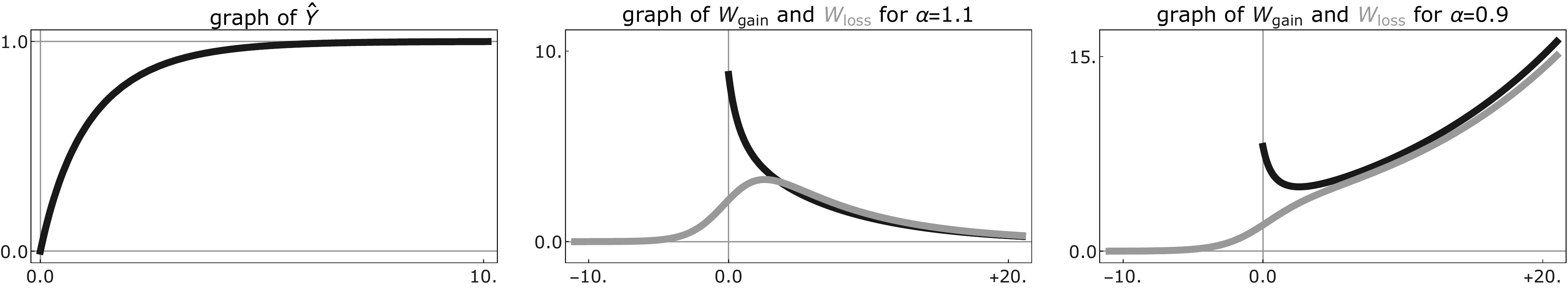}
} %
\caption{ Graph of the function $\widehat{Y}$ in \eqref{app:IntegralShift} as well as typical examples of the moment 
weights from \eqref{app:Weights} for the $\alpha$-kernel \eqref{kernelfamily} with $\alpha>1$ and $0<\alpha<1$.%
} %
\label{Fig.IntWeights1}
\end{figure}
As illustrated in Figure \ref{Fig.IntWeights1}, the weight functions from \eqref{app:Weights} can behave rather differently as $Y\to\infty$ and in the
case of non-decaying weight functions it is not advisable
to discretize the two integrals in \eqref{app:Integrals1} independently of each other. On the contrary, for the kernels in \eqref{kernelfamily} it is more convenient to 
reformulate \eqref{app:Dynamics1} as 
\begin{align}
\label{app:Dynamics2}
\partial_T u(T,\cdot)=\mathcal{I}_{\mathrm{A}}\big(U(T,\cdot)\big)+
\mathcal{I}_{\mathrm{B}}\big(u(T,\cdot)\big)
-\mathcal{I}_{\mathrm{C}}\big(u(T,\cdot)\big)\,,
\end{align}
where the three integral operators 
\begin{align}
\label{app:Integrals2}
\begin{split}
\mathcal{I}_\mathrm{A}\big(u(T,\cdot)\big)|_X&:= %
\int_0^\infty W_{\mathrm{gain}}(Y)u(T,X-1-Y)\Big(u\big(T,X-1+\hat{Y}(Y)\big)-u(T,X)\Big)\mathrm{d}{Y}\\
\mathcal{I}_\mathrm{B}\big(u(T,\cdot)\big)|_X&:=
u(T,X)\int_0^\infty \big(W_{\mathrm{gain}}(Y)-W_{\mathrm{loss}}(Y+1)\big)u(T,X-1-Y)\mathrm{d}{Y}
\\ %
\mathcal{I}_\mathrm{C}\big(u(T,\cdot)\big)|_X&:= %
u(T,X)\int_0^{\infty} W_{\mathrm{loss}}(1-Y)u(T,X-1+Y)\mathrm{d}{Y}\,,
\end{split}
\end{align}
possess better properties than $\mathcal{I}_{\mathrm{gain}}$ and $\mathcal{I}_{\mathrm{loss}}$. In fact, the weight functions for $\mathcal{I}_{\mathrm{B}}$  and $\mathcal{I}_{\mathrm{C}}$ decay exponentially as $Y\to\infty$, while the properties of $\hat{Y}$ ensure 
that  $u(T,X-1+\hat{Y}(Y))-u(T,X)$ decays faster than $1/W_{\mathrm{gain}}(Y)$ provided that $u$ is sufficiently regular at $(T,X)$.
All numerical data presented in this paper, see Figures \ref{fig:twaves}, \ref{fig:Nwaves},  are computed by a {\sc{Matlab}} implementation of the following, straight forward discretization of \eqref{app:Dynamics2}: 
\begin{enumerate}
\item 
Compute $u$ on the spatial grid $X\in\eps\mathbb{Z}\cap [0,L] $, where $1\ll L<\infty$ is a given discretization length and $0<\eps\ll1$ a chosen spacing.
\item 
Continue $u$ constantly for $X<0$ and $X>L$ by constants $c_{-}$ and $c_+$, respectively, with $c_-=c_+=0$ for integrable initial data and 
$c_->0$, $c_+=0$ in order to compute traveling waves.
\item 
Approximate all integrals from \eqref{app:Integrals2} by Riemann sums with respect to $Y\in\eps\mathbb{Z}\cap[0,R]$, where $R>1$ is another discretization parameter.
\item 
Use the explicit Euler scheme with step size $0<\tau\ll1$ for the time integration.
\end{enumerate}
The resulting numerical scheme, however, is neither very accurate nor fast. It remains a challenging task to construct alternative algorithms that allow
to resolve the long time behavior of coagulation equations with diagonal dominant kernels of homogeneity $1$ more efficiently.
In previous work Filbet and Lauren\c{c}ot \cite{FL04} developed a finite volume scheme to simulate the coagulation equation in the conservative form \eqref{eq2}, but did not study
the long-time behaviour specifically for kernels with homogeneity one.
Lee \cite{Lee01} simulated the discrete version of the coagulation equation. He considered the case of class-I kernels with homogeneity one, but the convergence
of the algorithm is slow in this case and the results are not completely conclusive.
It seems that no numerical simulations of the coagulation equation have been previously performed for the equation in exponential variables \eqref{uequation}. 
\medskip
We also mention that the dynamical solutions computed with the traveling wave continuation $u(T,X)=c_->0$ and $u(T,X)=c_+=0$ for $X<0$ and $X>L$, respectively,
might be unphysical for small times due to artificial  boundary effects. For sufficiently large times and sufficiently large computational domains, however, those 
numerical solutions approach a traveling wave that connects $c_-$ to $c_+$.


 \paragraph{Acknowledgment.}
  The authors acknowledge support through the CRC 1060 \textit{The mathematics of emergent effects} at the University of Bonn that is funded through the German Science Foundation (DFG).

{\small
 \bibliographystyle{plain}%

}

\end{document}